\def\clrr{\color{red}}
\newtheorem{thm}{Theorem}
\newtheorem{lem}[thm]{Lemma}
\newtheorem{corollary}[thm]{Corollary}
\theoremstyle{definition}
\newtheorem{prop}[thm]{Proposition}
\newtheorem{remark}{Remark}
\newenvironment{pf}{{\noindent \it \bf Proof. }}{{\hfill$\Box$}\\}
\def\hinv{{1\over h}}
\def\cQ{{\mathcal Q}}
\def\cKg{{\mathcal K} }
\def\gm{  \overline{\gamma}}
\def\gp{ (\gm+{\textstyle{1\over2}})}
\def\Obw{ {\Omega}^{\text{bw}}_{N,T}}
\def\Ofw{{\Omega}^{\text{fw}}_{N,T}}
\def\ep{\varepsilon}
\def\ba{\begin{array}}
\def\ea{\end{array}}
\def\bma{\left(\begin{matrix}}
\def\ema{\end{matrix}\right)}
\def\be{\begin{equation}}
\def\ee {\end{equation}}
\def\bse{\begin{subequations}}
\def\ese{\end{subequations}}
\def\wt{\widetilde}
\def\bbR{{\mathbb R}}
\def\CR{{M_0}}
\def\CRe{\CR_\ep}
\def\pt{\partial_t}
\def\px{\partial_x}
\def\pxi{\partial_{\xi}}
\def\pxixi{\partial_{\xi\xi}}
\def\mR{{\mathbb R}}
\def\cF{{\mathcal F}}
\def\somef{f}
\def\Tm{T}
\def\PVin{\omega_0}
\def\PVins{\omega_0^\sharp}
\def\Wins{W_0^\sharp}
			\def\Zins{Z_0^\sharp}
\def\Zf{Z^\flat}
\def\Gin{G_0}	\def\Ein{E_0}
\def\als{\alpha^\sharp}
\def\hs{{h^*_0}}
\def\htt{h^{\gamma+1\over2}}
\def\IC{\Big|_{t=0}}
\begin{document}

\title[ ]{Singularity formation and global existence of Classical Solutions for One Dimensional Rotating Shallow Water System}

\author[ ]{Bin Cheng}
\address{Department of Mathematics, University of Surrey, Guildford, United Kingdom}
\email{b.cheng@surrey.ac.uk}
\author[]{Peng Qu}
\address{School of Mathematical Sciences, Fudan University, Shanghai, 200433, China}
\email{pqu@fudan.edu.cn}
\author[ ]{Chunjing Xie}
\address{Department of mathematics, Institute of Natural Sciences, Ministry of Education Key Laboratory of Scientific and Engineering Computing, SHL-MAC, Shanghai Jiao Tong University, 800 Dongchuan Road, Shanghai, 200240, China}
\email{cjxie@sjtu.edu.cn}

\keywords{Rotating shallow water system, formation of singularity, Riemann invariants, global existence, Klein-Gordon equation}
\subjclass[2010]{35L40, 35L67, 35L72, 35Q86, 76U05}


\begin{abstract}
We study classical solutions of one dimensional rotating shallow water system which plays an important role in  geophysical fluid dynamics. The main results contain  two contrasting aspects. First,
when the solution crosses certain threshold, we prove finite-time singularity formation for the classical solutions by studying the weighted gradients of Riemann invariants and utilizing conservation of physical energy. In fact,  the singularity formation will take place for a large class of ${C}^1$ initial data  whose gradients and physical energy can   be   arbitrarily small and  higher order derivatives should be large. Second, when the initial data have constant potential vorticity,  global existence of small classical   solutions  is established  via studying  an equivalent form of  a quasilinear Klein-Gordon equation satisfying certain null conditions. In this global existence result, the smallness condition is in terms of the higher order Sobolev norms of the initial data.
\end{abstract}

\maketitle

\section{Introduction and Main Results}

 The one-dimensional rotating shallow water system plays an important role  in the study of geostrophic adjustment and zonal jets (e.g. \cite{Zeitlin, Galperin}).
 Upon suitable rescaling, the one-dimensional rotating shallow water system in  Eulerian form reads
 \begin{equation} \label{RE:Eu}
 \left\{
\begin{aligned}
&\pt h+\px ( h u)=0,\\
&\pt( h u)+ \px( h u^2)+  \px { h}^\gamma/\gamma= h v, \\
&\pt( h v)+ \px( h u v)=- h u,
\end{aligned}
\right.
\end{equation}
where $ h$  denotes the height of the fluid surface,   $u$
denotes the velocity component in the $x$-direction, and $v$ is the other horizontal velocity component that is in the direction orthogonal to the $x$-direction. The Coriolis force, caused by a rotating frame, is represented by the $( hv,-hu)^T$ terms on the right-hand side of the last two equations of \eqref{RE:Eu}. For the rotating shallow water model, one has $\gamma = 2$  in the pressure law (\cite{CF}), but in this paper we will prove results that are valid for the general case   $\gamma\geq 1$.

The system \eqref{RE:Eu} is a typical one dimensional system of balance laws, which has  attracted plenty of studies   since Riemann \cite{Dafermos}. One of the most important features of { nonlinear} hyperbolic system of conservation laws is that the wave speed depends on the solution itself so that the classical solutions in general are expected to form singularity  in finite time (cf. \cite{Lax, John}).
In fact, the  system \eqref{RE:Eu} can also be regarded as one dimensional compressible Euler system with source terms. The formation of singularity and critical threshold phenomena for the compressible Euler system without or with certain special source terms were studied in \cite{LiuTad, TW, CPZ} and references therein. The additional source terms very often demand substantial novel techniques in addition to the classical singularity formation theory developed in   \cite{Lax, John} and subsequent literature.

On the other hand, with $(h, u, v)$ depending only on $(t,x)$-variables,  the  system \eqref{RE:Eu} can be regarded as a special case of the two dimensional rotating shallow water system (\cite{Pedlosky}). The latter is a widely used  approximation of the three dimensional incompressible Euler equations and the Boussinesq equations in the regime of large scale geophysical fluid motion. In the two dimensional setting, it is shown in \cite{ChengXie}  that there exist global classical solutions for a large class of small initial data subject to the constant potential vorticity constraint, which is analogue of the irrotational constraint for the compressible Euler equations. Since   classical solutions of two dimensional compressible Euler system in general form singularity (c.f. \cite{Rammaha}), the   result of \cite{ChengXie}
evidences that the Coriolis forcing term plays a decisive role in the global well-posednesss theory for   classical solutions of   compressible flows.
A key ingredient in the proof of \cite{ChengXie}    is that the   rotating shallow water system and in fact also its one dimensional reduction,
 when subject to the (invariant) constant potential vorticity constraint, can be reformulated into a quasilinear Klein-Gordon system.
Note that the solutions of Klein-Gordon equations are of faster dispersive decay than those of the corresponding nonlinear wave system, the latter of which is derived from non-rotating
 fluid models. The rate of this dispersive decay, however, is tied to spatial dimension. In fact, in the one dimensional setting,
 the decay rate of Klein-Gordon system is not fast enough for  the global existence
 theory of \cite{ChengXie} to be directly applied to the system \eqref{RE:Eu}.

In short, for   studying the lifespan and global existence of classical solution for the one dimensional rotating shallow water system, we have to introduce novel techniques and investigate the nonlinear structure of the system \eqref{RE:Eu} more carefully.

The main objectives of this paper are two-fold. First, we study the formation of singularities for a general class of $C^1$ initial data by capturing  the nonlinear interactions in the system  \eqref{RE:Eu}. These initial data can have arbitrary small gradients and physical energy but higher order derivatives should be large. Second, we take a careful look at the structure of the system \eqref{RE:Eu}, exploit the dispersion provided by the Coriolis forcing terms, and show the global existence of classical solutions for a class of small initial data that are of small size in terms of higher order Sobolev norms.

Before stating the main theorems, we first rewrite the system \eqref{RE:Eu} in the Lagrangian coordinates, which takes a  simpler form in one dimensional setting.

Assume the initial height field $h(0,x)={h}_0(x)\in C^1(\mR)$ is strictly away from vacuum, i.e., $0<c_h\le {h}_0(x)=h(0,x)\le C_h$. It then induces   a  ``coordinate stretching'' at $t=0$ specified by a $C^2$ bijection $\phi:\mR \to\mR$ defined as
\be\label{relabel}  \xi=\phi(x):= \int_0^{ x} h(0, s)\,d s,\ee
whose inverse function   can be written as $x=\phi^{-1}(\xi)$.
Assume $ u\in C^1([0,T)\times\mR)$. Let $\sigma (t,\xi)$ be the unique particle path determined by
\begin{equation}\label{ODE:xit}
\left\{
\begin{aligned}
&\pt \sigma (t, \xi) = u(t, \sigma (t, \xi)),\\
&\sigma (0,\xi)=\phi^{-1}(\xi),
\end{aligned}
\right.
\end{equation}
so that for each fixed $t\in[0,T)$, we have   a $C^1$ bijection   $x=\sigma(t,\xi):\mR \to \mR$.
Define
\begin{equation}\label{def:huv}
\wt {h} (t, \xi) := h(t, \sigma (t, \xi)),\quad \wt {u}(t, \xi): = u(t, \sigma (t, \xi)), \quad \text{and} \quad \wt{v}(t, \xi) := v(t, \sigma (t, \xi)).
\end{equation}

In the Appendix \ref{secaa}, we show that $(h, u, v)$ solves the system \eqref{RE:Eu} if and only if $(\wt h, \wt u, \wt v)$ defined in \eqref{def:huv} is a solution of the following
rotating shallow water system in the Lagrangian form,
 \begin{equation} \label{RSW:Lag}
 \left\{
\begin{aligned}
&\pt  \wt h+ \wt h^2 \pxi \wt u=0,\\
&\pt  \wt u + \pxi \wt h^\gamma/\gamma - \wt v=0, \\
&\pt \wt v +  \wt u=0.
\end{aligned}
\right.
\end{equation}

For the rest of the paper, we deal with the  system \eqref{RSW:Lag}. For convenience, we drop the tilde signs  in $\wt h$, $\wt u$, $\wt v$ when there is no ambiguity for the presentation. Thus,
in the Lagrangian coordinates, the rotating shallow water system \eqref{RSW:Lag} can be written as
\be \label{p:hinv}
\left\{
\begin{aligned}
&\partial_t h +h^2\pxi  u=0, \\
&\partial_t u+\pxi  \big(h^\gamma/ \gamma\big)-v=0,  \\
&\partial_t v +u=0\,.
\end{aligned}
\right.
\ee
%
The   objective in   this paper is then to study the Cauchy problem for the system \eqref{p:hinv} with initial data
\begin{equation}\label{RSW:IC}
(h,u, v)(0,\xi) = (h_0(\xi), u_0(\xi), v_0(\xi))\qquad\text{for }\;\;\xi\in\mR \,.
\end{equation}
 For any $C^1$ solution of \eqref{p:hinv}, it follows from \eqref{p:hinv} that one has
\be \label{PV:Lag}
\pt\Big(\hinv  +\pxi v\Big)=0.
\ee
This is a key geophysical property of rotating fluid which is  conservation of potential vorticity.
Therefore, we have the invariance of the  potential vorticity
\be\label{ICvorticity}
{1\over h(t,\xi)}+\pxi v(t, \xi)= {1\over h(0,\xi)}+\pxi v(0, \xi):=\PVin(\xi)\,
\ee
in the  Lagrangian form.

Before stating the main theorems, some notations are in order. Inspired by the techniques of \cite{TW} by Tadmor and Wei, we introduce the ``weighted gradients of Riemann invariants'',
\begin{equation}\label{def:Z}
Z_j=\sqrt{h}\,\big[\pxi u+(-1)^j h^{\frac{\gamma-3}{2}}\pxi h\big]\quad \text{for}\,\, j=1,2.
\end{equation}
Also, define
\begin{equation}\label{defZins}
\Zins:=\sup_\xi \max_{1\leq i \leq 2} Z_i(0,\xi) \quad \text{and}\quad
\PVins:=\sup_{\xi}\PVin(\xi)\,.
\end{equation}
The first main result   is on the formation of singularities for classical solutions and consists of two theorems.
 \begin{thm}\label{thm:large:sing}
Fix $\Tm'\ge0$. Consider a classical solution  $(h,u,v)\in C^1([0,\Tm']\times\bbR)$ to the rotating shallow water    system \eqref{p:hinv} with initial data satisfying
 $\inf_{\xi }h_0>0$ and $(h_0-1, u_0, v_0)\in C_0^1(\mR)$.
 If
 \be\label{Zj:large:neg} \inf_\xi\min_{1\leq i \leq 2} Z_i(T',\xi)\le -\sqrt{2\PVins}    \,,\ee
then the solution  must develop a singularity in finite time $t=T^\sharp>T'$ in the following sense
\be\label{def:sing}
\inf_{\substack{0\leq t< T^\sharp,\\ \xi\in \mR}} h(t, \xi)>0,\quad \sup_{\substack{0\leq t< T^\sharp,\\ \xi\in \mR}}\max \{h(t, \xi),  |u(t,\xi)|, |v(t, \xi)|\}< \infty,
\ee
and
\be\label{def:sing2}
\sup_{\substack{0\leq t< T^\sharp,\\ \xi\in \mR}}\max_{1\leq j \leq 2} Z_j(t, \xi)<\infty, \quad
  \lim_{t\nearrow T^\sharp}\inf_{\xi\in \mR} \min_{1\leq j\leq 2} Z_j(t, \xi)=-\infty\,.
\ee
\end{thm}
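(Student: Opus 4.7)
The plan is to reduce the blow-up to a Riccati-type differential inequality for $W(t):=\inf_{j,\xi}Z_j(t,\xi)$ that forces $W\to-\infty$ in finite time, and to run the parallel argument at $M(t):=\sup_{j,\xi}Z_j(t,\xi)$ to obtain the upper bound on $Z_j$ demanded by \eqref{def:sing2}; the remaining $h,u,v$ bounds then follow from the upper bound $M$, finite propagation speed, and basic ODE estimates. Concretely, I would introduce the Riemann invariants $R_{\pm}:=u\pm\frac{2}{\gamma-1}h^{(\gamma-1)/2}$ and Lagrangian speeds $\lambda_{\pm}:=\pm h^{(\gamma+1)/2}$: the system \eqref{p:hinv} diagonalizes to $(\pt+\lambda_{\pm}\pxi)R_{\pm}=v$, while by definition $Z_2=\sqrt h\,\pxi R_+$ and $Z_1=\sqrt h\,\pxi R_-$. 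Differentiating the $R_+$ equation in $\xi$ and combining with the induced identity $(\pt+\lambda_+\pxi)\sqrt h=-\frac{h^{3/2}}{2}\pxi R_-$ (implied by the mass equation) yields
\[
(\pt+\lambda_+\pxi)Z_2 \;=\; \sqrt h\Bigl[-\tfrac{\gamma+1}{4}Z_2^2+\tfrac{\gamma-1}{4}Z_1 Z_2+\pxi v\Bigr],
\]
together with the $1\leftrightarrow 2$ counterpart for $Z_1$ along $\lambda_-$; and potential vorticity conservation \eqref{ICvorticity} replaces $\pxi v$ by $\PVin-1/h\le\PVins$, uniformly bounded above.

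The heart of the argument is an extremum principle. At a point $(\xi^*,j^*)$ realizing $W(t)\le 0$, the spatial derivative $\pxi Z_{j^*}(t,\xi^*)$ vanishes, so the characteristic derivative collapses to $\pt Z_{j^*}$; since $Z_i(t,\xi^*)\ge W$ for $i=1,2$ while $W\le 0$, one has the crucial pointwise inequality $Z_1 Z_2 \le W^2$, and substituting produces the Dini/Danskin-type bound
\[
\dot W^+(t) \;\le\; \sqrt{h(t,\xi^*)}\bigl(-\tfrac{1}{2}W^2+\PVins\bigr).
\]
The identical argument at the argmax (using $Z_i\le M$, $M\ge 0$ to get $Z_1 Z_2 \le M^2$) gives $\dot M^+\le\sqrt h\bigl(-\tfrac12 M^2+\PVins\bigr)$, which immediately forces $M(t)\le\max\{\Zins,\sqrt{2\PVins}\}$, establishing the $\sup_{j,\xi}Z_j<\infty$ part of \eqref{def:sing2}. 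For $W$, the threshold hypothesis \eqref{Zj:large:neg} makes the right-hand side strictly negative; once $|W|\ge 2\sqrt{\PVins}$ the linear term is absorbed into a quarter of the quadratic term, producing the Riccati inequality $\dot W^+\le -\tfrac14\sqrt h\,W^2$, which blows up to $-\infty$ at some finite $T^\sharp>T'$.

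To upgrade this into the full \eqref{def:sing}--\eqref{def:sing2} I would combine compact support of $(h_0-1,u_0,v_0)$ with the finite speed $|\lambda_\pm|$ to keep the perturbation compactly supported in $\xi$; the bound $M$ together with $\pt(1/h)=\pxi u=(Z_1+Z_2)/(2\sqrt h)\le M/\sqrt h$ yields $\inf h>0$ on $[0,T^\sharp)$ via a Bernoulli-type comparison $\tfrac{d}{dt}(1/\sqrt h)\le M/2$; and the ODE pair $\pt u=v-\pxi(h^\gamma/\gamma)$, $\pt v=-u$ closes by Gr\"onwall on the already-compact support. The main obstacle I anticipate is the coupled control of $h$ from above together with the Riccati rate, since the coefficient $\sqrt h$ in the $W$-inequality depends on $h$ at the moving argmin $\xi^*(t)$ where $\pt(1/h)$ can be very negative; simultaneously ruling out $h\to\infty$ before $W$ blows up requires a delicate bootstrap intertwining the quantitative Riccati time scale with the conserved energy $\pt\bigl(\tfrac{u^2+v^2}{2}+\tfrac{h^{\gamma-1}}{\gamma(\gamma-1)}\bigr)+\pxi(u h^\gamma/\gamma)=0$ and the compact support.
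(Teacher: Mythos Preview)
Your core mechanism---the Riccati-type inequality for $W(t)=\inf_{j,\xi}Z_j$ and the barrier argument for $M(t)=\sup_{j,\xi}Z_j$---is exactly what the paper does. The paper packages the $W$-argument slightly differently: rather than a Dini/Danskin inequality on $W$, it introduces an explicit comparison function $m(t)$ solving an ODE built from the a priori bounds $\theta^\flat(t)\le h\le\theta^\sharp(t)$, and proves a strict comparison lemma by integrating \eqref{Z:n} along a single characteristic through the current minimizer. This sidesteps the step where you assert $\pxi Z_{j^*}(t,\xi^*)=0$, which is not obviously licit since $Z_j$ involves first derivatives of a merely $C^1$ solution and need not itself be $C^1$ in $\xi$.

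The real issue is your last paragraph. The ``obstacle'' you anticipate---a delicate bootstrap between the Riccati rate and an upper bound on $h$ via the physical energy---does not exist for this theorem. First, large $\sqrt h$ at the argmin only \emph{accelerates} the Riccati blow-up; the rate concern is a \emph{lower} bound on $\sqrt h$, and you already have that from your $M$-bound via $\partial_t(1/\sqrt h)\le M/2$, giving $\sqrt{h}\ge\theta^\flat(t)^{1/2}$ with $\int\sqrt{\theta^\flat}\,dt=\infty$. Second, the upper bound on $h$ and the $L^\infty$ control of $(u,v)$ needed for \eqref{def:sing} do \emph{not} require energy or any coupling with $Z_j$: the paper obtains them by a direct Gr\"onwall on the Riemann invariants themselves. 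Since $(\partial_t+\lambda_j\partial_\xi)R_j$ is a linear combination of $R_1,R_2,R_3$ with coefficients $\pm1,\pm\tfrac12$, one gets $\sup_\xi\max_i|R_i|\le M_0 e^t$, and hence $h\le\vartheta(M_0e^t)$, on any interval where the $C^1$ solution persists. Your proposed alternative---``$\partial_t u=v-\partial_\xi(h^\gamma/\gamma)$, $\partial_t v=-u$ closes by Gr\"onwall''---does not close, because $\partial_\xi(h^\gamma/\gamma)=\tfrac12(Z_2-Z_1)h^{\gamma/2}$ contains both an unbounded-from-below $Z_j$ and an as-yet-uncontrolled power of $h$. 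The energy conservation you invoke is used in the paper only for Theorem~\ref{mainthm1} (the small-gradient regime), not here.
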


The proof of Theorem \ref{thm:large:sing} is given in Section \ref{sec31}.
\begin{remark}
It follows from the conservation of potential vorticity and the bounds for $h$ in \eqref{def:sing} that $\pxi v$ is bounded even when the singularity is formed. Furthermore, it follows from the definition of $Z_j$ ($j=1$, $2$) in \eqref{def:Z} and the estimates in \eqref{def:sing}-\eqref{def:sing2} that we have
\[
\lim_{t\nearrow T^\sharp}\inf_{\xi\in \mR} \pxi u(t, \xi)=-\infty\,.
\]
\end{remark}

Obviously, if the initial data satisfy \eqref{Zj:large:neg}, then the solution of the problem \eqref{p:hinv} form singularities in finite time. In fact, we can also characterize  a class of initial data which does not satisfy
 \eqref{Zj:large:neg} at the initial time, but rather evolve to satisfy  \eqref{Zj:large:neg}, and eventually form a singularity according to the above theorem.

We define physical energy of the rotating shallow water system  as
 \be\label{E:cons}E(t):=\int_{-\infty}^\infty{1\over2}(u^2+v^2)(t,\xi)+\cQ(h(t,\xi))\,d\xi,\ee
where
\begin{equation}\label{defcQ}
\cQ(h):={1\over\gamma} \int_1^h  ( s^{\gamma-2}-s^{-2})\,ds \ge 0 \,.
\end{equation}
Finally, define
\be\label{def:Gin}
\Ein=E(0)\quad\text{ and }\quad \Gin:= \sqrt{2\PVins}+\max\Big\{\Zins ,\,\sqrt{2 \PVins}\Big\}\,,
\ee
where $\PVins$ and $\Zins$ are defined in \eqref{defZins}.

\begin{thm}\label{mainthm1}
Consider the Cauchy problem  for the system \eqref{p:hinv}  subject to initial data \eqref{RSW:IC} satisfying $(h_0-1,u_0,v_0)\in C_0^1(\mR)$ and
 $\inf_{\xi }h_0>0$.
Suppose  \be\label{mainthm1:cond}\inf_\xi\min_{1\leq i\leq 2} Z_i(0, \xi)< -\sqrt2\,\sqrt{\PVins-\Big[\cF_\gamma^{-1}\big(\Gin\,\Ein\big)+1\Big]^{-{2\over\gamma}}}\,\ee
where  $\cF_\gamma^{-1}$ is the inverse function of the function $\cF_\gamma(\cdot)$ defined by
\be\label{def:cF} \cF_\gamma(\alpha ):{=}{16\over3\gamma^3}\, {\alpha^3\over  ( \alpha+1)^3}\Big\{   (\alpha+1)^{3-\frac{2}{\gamma}}+(\alpha+1)^{3-\frac{2}{\gamma}-1}\Big\}\,.
 \ee
Then the solution must develop a singularity in finite time $t=T^\sharp$ in the sense of \eqref{def:sing} and \eqref{def:sing2}.
\end{thm}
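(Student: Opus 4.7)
The plan is to deduce Theorem \ref{mainthm1} from the already-established Theorem \ref{thm:large:sing} by showing that, under the initial-data hypothesis \eqref{mainthm1:cond}, the pointwise condition \eqref{Zj:large:neg} must be attained at some finite time $T'$; Theorem \ref{thm:large:sing} then supplies a singular time $T^\sharp > T'$. To set up, introduce the Riemann invariants $R_\pm := u \pm \frac{2}{\gamma-1} h^{(\gamma-1)/2}$, which satisfy $(\pt \pm h^{(\gamma+1)/2}\pxi) R_\pm = v$. Differentiating in $\xi$, rescaling by $\sqrt h$, and replacing $\pxi v$ by $\PVin - 1/h$ (from the invariance \eqref{ICvorticity}), yields along each characteristic $d\xi/dt = (-1)^j h^{(\gamma+1)/2}$ a Riccati-type ODE of the schematic form
\[ \frac{d Z_j}{dt} = -\tfrac{\gamma+1}{4}\sqrt h\, Z_j^2 + \tfrac{\gamma-1}{4}\sqrt h\, Z_1 Z_2 + \sqrt h\,\bigl(\PVin - \tfrac{1}{h}\bigr). \]
Provided $h$ remains bounded above and $|Z_k|$ remains controlled for $k=1,2$, the quadratic damping will drive $Z_j$ to $-\infty$ in finite time.

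Establishing those two auxiliary bounds forms the technical core of the argument. First, I aim to prove the pointwise height bound
\[ h(t,\xi) \leq (\alpha^*+1)^{2/\gamma}, \qquad \alpha^* := \cF_\gamma^{-1}(\Gin \Ein), \]
valid on $[0,T']$ as long as $\sup_{j,\xi} Z_j \leq \Gin$. This comes from the conserved physical energy \eqref{E:cons}--\eqref{defcQ}: $\cQ(h)$ grows like $h^{\gamma-1}$ for large $h$, while definition \eqref{def:Z} lets a $\sup Z_j$ bound control $|\pxi h|$, and a one-dimensional Gagliardo--Nirenberg-type interpolation between these two pieces turns the $L^1_\xi$ energy $\Ein$ into the pointwise bound, with the exponents in $\cF_\gamma$ reflecting this balance. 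Second, the choice of $\Gin$ in \eqref{def:Gin} is calibrated so that a symmetric ``$\max Z_j$'' differential inequality bootstraps $\sup_{j,\xi} Z_j \leq \Gin$ on $[0,T']$. With these in hand, the $h$-bound yields $\pxi v \leq \PVins - (\alpha^*+1)^{-2/\gamma}$, and absorbing the cross term via $|Z_k|\leq \Gin$ turns the Riccati ODE into
\[ \frac{dZ_j}{dt} \leq -\tfrac{\gamma+1}{4}\sqrt h\,\Bigl(Z_j^2 - 2\bigl(\PVins - (\alpha^*+1)^{-2/\gamma}\bigr)\Bigr). \]
Hypothesis \eqref{mainthm1:cond} produces $\xi_0$ at which $Z_j(0,\xi_0)$ already sits strictly below $-\sqrt{2(\PVins - (\alpha^*+1)^{-2/\gamma})}$ for some $j$; the standard Riccati comparison along the characteristic through $(0,\xi_0)$ then forces $Z_j$ down to $-\sqrt{2\PVins}$ at a finite time $T'$, so \eqref{Zj:large:neg} holds and Theorem \ref{thm:large:sing} concludes.

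The principal obstacle is the energy-to-pointwise conversion above: the integrated quantity $\Ein$ by itself does not control $\|h\|_{L^\infty_\xi}$, so one must combine $\cQ(h)\geq 0$ with the $Z_j$-driven control of $\pxi h$ in a sharp one-dimensional interpolation. The exponents $3 - 2/\gamma$ and the constant $16/(3\gamma^3)$ appearing in \eqref{def:cF} are precisely the trace of this interpolation, and are what cause the threshold in \eqref{mainthm1:cond} to depend on $\Ein$ through the implicit function $\cF_\gamma^{-1}$ rather than linearly.
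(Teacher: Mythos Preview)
Your high-level architecture matches the paper exactly: derive the Riccati system \eqref{Z:n} for $Z_j$, combine the conserved energy $\Ein$ with a gradient bound on $h^{\gamma/2}$ via a one-dimensional interpolation to obtain the pointwise ceiling $h\le h_0^*:=[\cF_\gamma^{-1}(\Gin\Ein)+1]^{2/\gamma}$ (this is the paper's Lemma~\ref{lem:fss}), feed this back into the Riccati inequality to force $\inf_\xi\min_jZ_j$ down to $-\sqrt{2\PVins}$, and then invoke Theorem~\ref{thm:large:sing}.

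There is, however, a genuine gap in your treatment of the cross term. Your displayed inequality
\[
\frac{dZ_j}{dt}\le -\tfrac{\gamma+1}{4}\sqrt h\Bigl(Z_j^2-2\bigl(\PVins-(\alpha^*+1)^{-2/\gamma}\bigr)\Bigr)
\]
does not follow from ``$|Z_k|\le\Gin$'' along a fixed characteristic through $(0,\xi_0)$. Bounding $|Z_k|$ only yields a term linear in $|Z_j|$, which shifts the threshold and would \emph{not} reproduce \eqref{mainthm1:cond}. The paper does not follow a single characteristic; it proves a comparison principle (Lemma~\ref{lem:comp}) for the running infimum $\Zf(t):=\inf_\xi\min_jZ_j(t,\xi)$. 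At a point where this infimum is attained one has $Z_j\le Z_k$ with $Z_j<0$, whence $Z_1Z_2\le Z_j^2$, and the coefficient in \eqref{Z:n} collapses from $-\tfrac{\gamma+1}{4}$ to $-\tfrac12$:
\[
D_jZ_j\le \sqrt h\Bigl[-\tfrac12 Z_j^2+\PVins-\tfrac1h\Bigr]\le \sqrt h\Bigl[-\tfrac12 Z_j^2+\PVins-\tfrac{1}{h_0^*}\Bigr].
\]
This is precisely what produces the threshold $-\sqrt{2}\sqrt{\PVins-(h_0^*)^{-1}}$ in \eqref{mainthm1:cond}. A single-characteristic argument cannot access the structural inequality $Z_1Z_2\le Z_j^2$, because along that curve $Z_k$ need not dominate $Z_j$.

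A smaller point: $\Gin$ is not a bound on $\sup_{j,\xi}Z_j$. The unconditional upper bound $Z_j\le\Wins=\max\{\Zins,\sqrt{2\PVins}\}$ is Lemma~\ref{lem:up:low}; the quantity $\Gin=\sqrt{2\PVins}+\Wins$ enters as the width of the \emph{gap} $|Z_1-Z_2|<\Gin$ under the bootstrap $Z_j>-\sqrt{2\PVins}$, and it is this gap that feeds into $|\pxi h^{\gamma/2}|<\tfrac{\gamma}{4}\Gin$ (equation \eqref{h:G}) and thence into the interpolation.
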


The proof of Theorem \ref{mainthm1} is a straightforward combination of Theorems \ref{thm:large:sing} and \ref{thmenergy}. We have the following remarks.

\begin{remark}
Straightforward computations show that $\cF_\gamma$ defined in \eqref{def:cF} is a monotonically increasing function mapping $(0,\infty)$ to $(0, \infty)$. Hence the inverse function $\cF_\gamma^{-1}$ is always well-defined on $(0,\infty)$.
 \end{remark}
\begin{remark}
If $(h_0-1,v_0)$ are compactly supported, it follows from \eqref{ICvorticity} that one always has $\PVins\ge1$, so all the square roots in  \eqref{Zj:large:neg}, \eqref{def:Gin}, and \eqref{mainthm1:cond} are always real.
\end{remark}

\begin{remark}
We consider only the initial data  such that $(h_0-1,u_0,v_0)$ are compactly supported. As the propagation of information for general data is  at a finite speed, the results in Theorems \ref{thm:large:sing} and \ref{mainthm1} can be easily extended to general initial data without compact support. Also thanks to the finite speed of propagation, when the initial data are indeed compactly supported and a singularity does develop in finite time as in \eqref{Zj:large:neg} and \eqref{def:sing2}, we actually have the singularity occur at a finite location as well.
 \end{remark}

\begin{remark}\label{OE3} {The singularity formation criterion \eqref{mainthm1:cond} allows  arbitrarily small initial gradients at the order of $O(\Ein^{1/3})$.} Indeed, by definition of $\cF_\gamma$ in \eqref{def:cF}, we have
 \[\lim_{\alpha\searrow 0}{\cF_\gamma(\alpha)\over\alpha^3}={32\over 3\gamma^3}\,.\]Therefore, with   $G_0>0$ bounded above by a constant, we can find positive constants $C$ and $\overline E_{0}$ so that
 \be\label{CE3} C^{-1} {\Ein}^{1/3} \leq \cF_\gamma^{-1}\big(\Gin\,\Ein\big)\leq C{\Ein}^{1/3} \quad\text{for all }\;\; E_0<\overline E_0.\ee
Then, by choosing arbitrarily small $E_0<\overline E_0$ and choosing $\PVins$ to be arbitrarily close to 1 (with the most convenient choice being $h_0\equiv 1, v_0\equiv 0$), we make the right hand side of (18) at order $O(E_0^{1/3})$. This in turn allows us to choose initial data having small gradients, i.e.,
$$Z_j\Big|_{t=0} =\inf_x\Big\{\sqrt{h}\,\big[\pxi u+(-1)^j h^{\frac{\gamma-3}{2}}\pxi h\big]\Big\}\IC\sim O(E_0^{1/3})$$
which satisfy the condition (18) for the singularity formation.
\end{remark}

 \begin{remark}
{ The singularity formation criterion \eqref{mainthm1:cond} also reflects the fact that we utilize physical energy and its conservation to prove pointwise singularity formation. }
 \end{remark}

 In fact, if the initial data has not only small gradients, but also small higher derivatives in Sobolev spaces, there is a global existence of classical solutions for rotating shallow water system. This is our next main result on the global existence of classical solutions for rotating shallow water system.
\begin{thm}\label{mainthm2} Consider the Cauchy problem  \eqref{p:hinv} and \eqref{RSW:IC} subject to compactly supported  initial data $(h_0-1,u_0,v_0)$ with
 $\inf_{\xi }h_0>0$.
Suppose that $ \PVin\equiv 1$. Then, there exists a small positive number $\delta$ so that if the   Sobolev norm  $\|  u_0\|_{H^{k}(\mR)}+\|v_0 \|_{H^{k+1}(\mR)}<\delta$ for some sufficiently large integer $k$, then    there is a global classical solution for the problem \eqref{p:hinv} and \eqref{RSW:IC} for all time $t\ge0$.
\end{thm}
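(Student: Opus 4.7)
The plan is to recast the Cauchy problem, under the constraint $\PVin\equiv 1$, as a scalar quasilinear Klein--Gordon equation for $v$ alone, verify a null structure on its quadratic part, and then close a high-regularity bootstrap argument via Klainerman's vector-field method together with a normal-form (or modified-energy) correction.

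\emph{Reduction to a scalar Klein--Gordon equation.} The constant potential vorticity relation \eqref{ICvorticity} with $\PVin\equiv 1$ gives $h=(1-\pxi v)^{-1}$, valid as long as $\pxi v$ stays bounded away from $1$. Combining $\pt v=-u$ with $\pt u=v-\pxi(h^\gamma/\gamma)$ eliminates $u$ and yields
\[
\ptt v + v - \pxi\big((1-\pxi v)^{-\gamma}/\gamma\big)=0.
\]
Expanding $(1-\pxi v)^{-\gamma}/\gamma=1/\gamma+\pxi v+P(\pxi v)$ with $P(y)=\tfrac{\gamma+1}{2}y^2+O(y^3)$, I obtain the quasilinear Klein--Gordon equation
\[
\ptt v-\pxixi v+v=\pxi P(\pxi v)
\]
on $[0,\infty)\times\mR$, with initial data $v(0,\xi)=v_0(\xi)$ and $\pt v(0,\xi)=-u_0(\xi)$, whose Sobolev norms are controlled by $\delta$. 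Once $v$ is constructed globally, $h$ and $u$ are recovered by the algebraic formulas above.

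\emph{Null structure, vector fields, and normal form.} The quadratic part of the nonlinearity is $\pxi P_2(\pxi v)$ with $P_2(y)=\tfrac{\gamma+1}{2}y^2$, whose bilinear Fourier symbol is proportional to $(\eta_1+\eta_2)\eta_1\eta_2$. On the Klein--Gordon characteristic set $\tau_j=\pm\sqrt{\eta_j^2+1}$, the only time-resonance is $\eta_1=\eta_2=0$, at which the symbol vanishes to third order; this is a 1D Klainerman-type null condition and is the cancellation driving global existence. Using the 1D Klein--Gordon vector fields $\Gamma\in\{\pt,\pxi,L\}$ with $L:=\xi\,\pt+t\,\pxi$, I would define
\[
\mathcal{E}_k(t):=\sum_{|I|\le k}\Big(\|\Gamma^I\pt v(t)\|_{L^2}^2+\|\Gamma^I\pxi v(t)\|_{L^2}^2+\|\Gamma^I v(t)\|_{L^2}^2\Big),
\]
apply a normal-form transformation that removes the non-resonant quadratic contribution, and combine with the 1D Klainerman--Sobolev pointwise bound $\|\pxi v(t)\|_{L^\infty}\lesssim(1+t)^{-1/2}\mathcal{E}_k(t)^{1/2}$. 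Together with the null cancellation, this produces an energy inequality with integrable-in-time right-hand side, which closes a continuity argument and yields $\mathcal{E}_k(t)\lesssim\delta^2$ uniformly in $t\ge 0$.

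The main obstacle is the quasilinear character of the nonlinearity: $\pxi P(\pxi v)$ produces a top-order contribution of the form $(\gamma+1)\,\pxi v\cdot\pxixi v$, so a naive normal-form transformation requires inverting operators that lose one derivative on the principal part. I expect this to be handled by first exploiting the divergence form via integration by parts to transfer derivatives off the highest-order factor, and then applying either a paradifferential normal form that only affects low frequencies and commutes with the Klein--Gordon propagator modulo acceptable remainders, or a carefully designed modified energy tailored to the null symbol. This quasilinear derivative loss, compounded by the large number of commutations of the equation with $L$ required to produce pointwise decay, is precisely why the hypothesis is stated in the high Sobolev norm $H^{k+1}$ rather than at the mere $C^1$ level of Theorems~\ref{thm:large:sing}--\ref{mainthm1}.
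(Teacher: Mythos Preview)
Your reduction to the scalar quasilinear Klein--Gordon equation for $v$ is correct and matches the paper exactly (the paper writes it out for $\gamma=2$). The paper, however, does not carry out a vector-field/normal-form bootstrap itself: it simply verifies that the nonlinearity satisfies Delort's null condition from \cite{Delort:1D} and then invokes Delort's global existence theorem directly.

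There is a genuine gap in your treatment of the null structure. For one-dimensional Klein--Gordon with positive mass, the quadratic phase $\omega(\eta_1+\eta_2)\mp\omega(\eta_1)\mp\omega(\eta_2)$ with $\omega(\eta)=\sqrt{1+\eta^2}$ never vanishes, so your claim that ``the only time-resonance is $\eta_1=\eta_2=0$'' is incorrect, and in any case vanishing of the quadratic symbol at the origin is not what drives global existence here. Precisely because there are no quadratic time resonances, a normal form can always remove the quadratic part; the real obstruction is that the resulting effective \emph{cubic} nonlinearity---the original cubic $\pxi\big(2(\pxi v)^3\big)$ together with the cubic terms produced by the normal form---contributes like $t^{-1}$ in the energy inequality, which gives logarithmic growth and does not close your bootstrap. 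Delort's null condition is an algebraic constraint on this effective cubic interaction: his quantity $\Phi(y)$ is built from \emph{both} the quadratic and the cubic parts of the nonlinearity. The paper checks it by computing Delort's $q_1''$ from the quadratic term $3\,\pxi v\,\pxixi v$ and $p_2''$ from the cubic term $6(\pxi v)^2\pxixi v$, and verifying $\Phi\equiv 0$. Your proposal analyses only the quadratic symbol and never examines the cubic level, so as written it cannot produce an integrable-in-time right-hand side.
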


Theorem \ref{mainthm2} is proved in Section \ref{sec:KL}.
There are a few remarks in order.

\begin{remark}
In Theorem \ref{mainthm2}, we consider only the data close to the constant state $(1,0,0)$. In fact, the results also hold for any data close to $(\bar H_0, 0, 0)$ with a constant $\bar H_0$.
\end{remark}

 \begin{remark}
 {Although the singularity formation result in Theorem \ref{mainthm1} allows {arbitrarily small initial gradients} at the order of $O(\Ein^{1/3})$ for any sufficiently small $\Ein$,
  Theorem \ref{mainthm1} and Theorem \ref{mainthm2} are compatible, or more precisely, they characterize different sets of initial data. To see this, we recall Gagliardo-Nirenberg interpolation inequality} to have
  \[\|\pxi u_0\|_{L^\infty(\mathbb{R})}\le C\|u_0\|_{H^2(\mathbb{R})} \leq C\|u_0\|_{H^{k}(\mathbb{R})}^{2\over k}\,\|u_0\|_{L^2(\mathbb{R})}^{1-{2\over k}}\le C\,\delta^{2\over k}\,\Ein^{{1\over2}-{1\over k}}\,.\]
For any initial data satisfying the assumptions in Theorem \ref{mainthm2} with $k\geq 7$ so that $\PVin\equiv 1=\PVins$ and small $E_0<\delta$, one has
\[\|\pxi u_0\|_{L^\infty(\mathbb{R})}\le C \delta^{2\over k} \Ein^{5/14}.
\]
Applying similar argument to $1- {1\over h_0}=\PVin - {1\over h_0} = \pxi v_0$ shows that
\[
\|h_0-1\|_{L^\infty(\mR)}+ \|\pxi h_0\|_{L^\infty(\mR)}\le C \delta^{2\over k}  E_0^{5/14},
\]
which is  much smaller than $ O(\Ein^{1/3})$. By the lower bound in \eqref{CE3}, it is {impossible} for such initial data to also satisfy the assumption \eqref{mainthm1:cond} of Theorem \ref{mainthm1} as long as we choose $\delta$ in Theorem \ref{mainthm2} to be sufficiently small.
 \end{remark}

\begin{remark}
By the Theorem \ref{mainthm2} above, with sufficiently small initial data,   there is a global solution for the rotating shallow water system, which is fundamentally different from  the non-rotating, compressible Euler system \cite{John}. This shows that the rotation plays an important role in the well-posedness theory of classical solutions to the partial differential equations modeling compressible   flows.
\end{remark}

\begin{remark}
In fact, the results on both singularity formation  and global existence in this paper also work in the similar fashion for the one dimensional Euler-Poisson system with a nonzero background charge for hydrodynamical model in semiconductor devices and plasmas. We would also like to mention the recent work \cite{GHZ} where the global existence of classical solutions for the Euler-Poisson system with small initial data was proved via a different method.
\end{remark}

The rest of the paper is organized as follows. In Section \ref{sec:Rie}, we introduce the Riemann invariants and weighted gradients of Riemann invariants, and give some basic estimates for these quantities.  In Section \ref{sec:shock}, we  prove the finite time formation of singularity via investigating the weighted gradients of Riemann invariants and utilizing conservation of physical energy.
In Section \ref{sec:KL}, we reformulate the Lagrangian rotating shallow water system subject to constant potential vorticity  into a one dimensional Klein-Gordon equation which is then shown to satisfy the null conditions in \cite{Delort:1D}. The results in \cite{Delort:1D} help establish the global existence of small classical solutions.
We also provide two appendices. Appendix \ref{secaa} contains the proof for the equivalence between the Eulerian form of rotating shallow water system \eqref{RE:Eu} and its Lagrangian form \eqref{RSW:Lag}. In Appendix \ref{secab}, we present two elementary lemmas which are used to prove the singularity formation for the rotating shallow water system.

\section{Riemann Invariants and Their Basic Estimates}\label{sec:Rie}
The system \eqref{p:hinv} is a typical $3\times 3$ system of balance laws. One way to diagonalize the system of balance laws is to write the system in terms of the Riemann invariants.
 However, a $3\times 3$ system usually does not have $3$ full Riemann invariant coordinates \cite{Dafermos}. Fortunately, the system \eqref{p:hinv} has $3$ full Riemann invariant coordinates $R_i$ ($i=1, 2, 3$), so that
 the  system \eqref{p:hinv} is recast into a ``diagonalized'' form,
\be \label{R:sys}
\left\{
\begin{aligned}
&\partial_t R_1 -\htt\pxi  R_1 -R_3=0\,,\\
&\partial_t R_2 +\htt\pxi  R_2 -R_3=0\,,\\
&\pt R_3+{R_1+R_2\over 2}=0\,,
\end{aligned}
\right.
\ee
where, by borrowing notations from the so-called $p$-system, we let $p({1\over h})={h^\gamma\over\gamma}$, i.e., $p(s):={s^{-\gamma}\over\gamma}$ and define Riemann invariants as
\be\label{R:def}\left\{ \begin{aligned}
&R_1:=u+\int_1^\hinv \sqrt{-p'(s)} \,ds=u-\cKg(h),\\
&R_2:=u-\int_1^\hinv \sqrt{-p'( s)}\, ds=u+\cKg(h),\\
&R_3:=v,
\end{aligned}\right.\ee
with, apparently,
\be\label{K:gamma:def}
\cKg(h):= \int_1^h s^{\gamma-3\over2}\,ds\,. \ee
Note that   $h$ can be expressed in terms of the Riemann invariants as
 \be\label{h:form} h= \vartheta\Big({R_2-R_1\over2}\Big)\quad\;\;\text{with}\quad\;\;\vartheta(z) =\cKg^{-1}(z) =\begin{cases}\left({\gamma-1\over2}\,z+1\right)^{2\over\gamma-1},&\gamma>1,\\
e^z,&\gamma=1.\end{cases}\ee

Based on the Riemann invariants formulation alone, we have the following estimates related to the $L^\infty$ bounds of the solutions which then lead to an important upper bound for $h$ and consequently the finite speed of propagation.

\begin{lem}\label{lem:up:h}Fix $T>0$.
 Let   $(h,u,v)\in C^1([0,T]\times\bbR)$ with $h>0$ solve  system \eqref{p:hinv} and equivalently \eqref{R:sys}.  Suppose
 \be\label{main:bd:cond}
\Zins,\;\PVins,\;\inf_{\xi }h_0\;\text{ and }\;\sup_{\xi }\{h_0,|u_0|,|v_0|\}\;\;\; \text{are all finite and positive}\ee
and
  $$\CR  :=\sup_{\xi\in\bbR}\{|R_1(0,\xi)|,|R_2(0,\xi)|,|R_3(0,\xi)|\}<\infty.$$
 Then, at any $t\in[0,T]$, we have
 \be\label{max:R}\sup_{\xi\in\bbR}\{|R_1(t,\xi)|,|R_2(t,\xi)|,|R_3(t,\xi)|\}\le \CR e^t
 \ee
 and
 \be\label{max:h}\sup_{\xi\in\bbR} h(t,\xi)\le\theta^\sharp(t):=\vartheta(\CR  e^t) =\begin{cases}\left({\gamma-1\over2}\,\CR  e^t+1\right)^{2\over\gamma-1},&\gamma>1,\\
e^{ (\CR  e^t )},&\gamma=1.\end{cases} \ee

 \end{lem}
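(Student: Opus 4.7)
I would use the method of characteristics on the diagonalized Riemann-invariant system \eqref{R:sys} and close the resulting integral inequality by Gronwall.

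Each of the three equations in \eqref{R:sys} is a transport equation with a zeroth-order source linear in the $R_i$'s: $R_1$ is transported with characteristic speed $-h^{(\gamma+1)/2}$ and driven by $R_3$; $R_2$ with speed $+h^{(\gamma+1)/2}$, also driven by $R_3$; and $R_3$ has zero speed, driven by $-(R_1+R_2)/2$. Fixing $(t,\xi^\ast)\in[0,T]\times\mathbb R$, I would solve the two backward characteristic ODEs
\[\xi_i'(s)=(-1)^i h^{(\gamma+1)/2}(s,\xi_i(s)),\qquad \xi_i(t)=\xi^\ast,\quad i=1,2,\]
which are well-posed locally because $h\in C^1$ and $h>0$. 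Along these curves \eqref{R:sys} reduces to ODEs $\tfrac{d}{ds}R_i(s,\xi_i(s))=R_3(s,\xi_i(s))$ for $i=1,2$, while the trivially ``vertical'' characteristic for $R_3$ gives $\tfrac{d}{ds}R_3(s,\xi^\ast)=-\tfrac12(R_1+R_2)(s,\xi^\ast)$.

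Setting $M(t):=\sup_{\xi\in\mathbb R}\max_{i=1,2,3}|R_i(t,\xi)|$, integrating each ODE from $s=0$ to $s=t$ and using $|R_i(0,\cdot)|\le \CR$ yields, for every $i$ and $\xi^\ast$,
\[|R_i(t,\xi^\ast)|\le \CR + \int_0^t M(s)\,ds.\]
Taking the supremum in $\xi^\ast$ and $i$ gives $M(t)\le \CR+\int_0^t M(s)\,ds$, and Gronwall's inequality yields $M(t)\le \CR e^t$, which is precisely \eqref{max:R}. The height bound \eqref{max:h} then falls out of \eqref{h:form}: since $|R_2-R_1|/2\le M(t)\le \CR e^t$ and $\vartheta$ is monotone increasing, $h=\vartheta((R_2-R_1)/2)\le \vartheta(\CR e^t)=\theta^\sharp(t)$.

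The subtle step is that the Gronwall argument implicitly assumes the backward characteristics extend on all of $[0,t]$ and that $M(s)$ remains finite on the way, neither of which is automatic from the hypotheses. I would handle this by a standard continuation argument: let $T^\star$ be the supremum of $t'\in[0,T]$ for which the backward characteristics through every point of $\{t'\}\times\mathbb R$ reach $s=0$ and the bound $M(s)\le \CR e^s$ holds on $[0,t']$. The hypothesis \eqref{main:bd:cond}, together with local Lipschitz-ness of $h^{(\gamma+1)/2}(s,\cdot)$ inherited from $h\in C^1$ and $h>0$, gives $T^\star>0$; on $[0,T^\star)$ the Gronwall bound propagates $h\le \theta^\sharp(T^\star)<\infty$ uniformly in $\xi$, which prevents both the characteristic speeds and $M$ from blowing up; continuity of the $C^1$ solution $(h,u,v)$ then extends the estimate past any interior $T^\star$, forcing $T^\star=T$ and completing the proof.
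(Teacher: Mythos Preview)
Your core strategy---integrate each Riemann invariant along its characteristic and close a Gronwall inequality for $M(t)=\sup_\xi\max_i|R_i(t,\xi)|$---is exactly what the paper does, and your derivation of \eqref{max:h} from \eqref{max:R} via \eqref{h:form} matches the paper verbatim.

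The one place where the two arguments diverge is precisely the ``subtle step'' you flagged. Your continuation argument hinges on $T^\star>0$, which in turn needs $M(s)<\infty$ (equivalently $\sup_\xi h(s,\cdot)<\infty$) on some initial interval $[0,\delta]$. But $h\in C^1([0,T]\times\bbR)$ does \emph{not} by itself give spatial boundedness for $s>0$, and the hypothesis \eqref{main:bd:cond} only controls $t=0$; so the sentence ``local Lipschitz-ness \ldots\ gives $T^\star>0$'' is not quite justified as written. The paper sidesteps this by never working with the global supremum $M(t)$ directly: it fixes $\ep>0$, $N>0$ and restricts to the compact trapezoid $A_{N,T,\ep}$ of \eqref{def:A:trap}, on which $h$ is automatically bounded by continuity. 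It then argues by contradiction at the first time $t'$ where $\max_{A_{N,T,\ep}}|e^{-t}R_i|$ reaches $\CR+\ep$; before $t'$ the bound forces characteristic speeds to be strictly smaller than the slope of the trapezoid's lateral sides, so backward characteristics stay inside, and Gronwall yields the contradiction. Letting $N\to\infty$ and $\ep\to0$ gives \eqref{max:R}. Your approach can be repaired by the same localization (run your Gronwall on such trapezoids rather than on all of $\bbR$), after which it becomes the paper's proof.
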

\begin{pf} Obviously, the estimate \eqref{max:h} is a consequence of the representation \eqref{h:form} and the estimate \eqref{max:R}. So we need only to prove the estimate \eqref{max:R}. Then, it suffices to show that,  for any $\ep>0$,   $N>0$, we have
\be \label{max:Xb}
\max_{(t,\xi)\in A_{N,T,\ep} }\max_{1\leq i\leq 3}|e^{-t}R_i|(t,\xi)<  \CRe:=\CR+\ep,
\ee
where $A_{N,T,\ep}$ (see Fig. 1) is the trapezoid
\be\label{def:A:trap} A_{N,T,\ep}:=\Big\{(t,\xi)\in [0,T]\times\bbR\,\Big|\,  |\xi|\le N+(T-t)\big[\vartheta(e^T(\CRe  + \ep))\big]^{\gamma+1\over2}\Big\}.\ee

Suppose that the estimate \eqref{max:Xb} is not true. By the compactness of  $A_{N,T,\ep}$, there must exist an earliest time $t'$ so that
\be\label{earliest}
\max_{(t,\xi)\in A_{N,T,\ep}\atop t\in[0,t']}\max_{1\leq i\leq 3}|e^{-t}R_i|(t,\xi)=  \CRe  .\ee
Note the definition of $\CR$ implies that $t'>0$. The speeds  of  the characteristics of \eqref{R:sys} are  $-h^{\gamma+1\over2}$, $0$, and $h^{\gamma+1\over2}$, respectively. It follows from \eqref{h:form} and \eqref{earliest} that one has
\[h^{\gamma+1\over2}< \big[\vartheta(e^T(\CRe  + \ep) )\big]^{\gamma+1\over2}\quad\text{ for }(t,\xi)\in A_{N,T,\ep}\text{ and }t\in[0,t'].\]
 Thus, the above estimate together with  the definition  of $A_{N,T,\ep}$ in \eqref{def:A:trap} guarantees that  all   characteristics of \eqref{R:sys} emitting from $(t',\xi')$ and going  backward in time always stay within $A_{N,T,\ep}$.  Now, introduce
 \[\begin{aligned}m^\sharp(t):=\max_{ (t,\xi)\in A_{N,T,\ep} }\max_{1\leq i\leq 3}R_i(t,\xi)\,\quad \text{and}\quad
 m^\flat(t):=\min_{ (t,\xi)\in A_{N,T,\ep} }\max_{1\leq i\leq 3}R_i(t,\xi)\,.
 \end{aligned}\]
 Then, for any $t\in(0,t']$, upon integrating each equation of  \eqref{R:sys} along the associated characteristic from $0$ to $t$, we have
 \[m^\sharp(t)\le m^\sharp(0)+\int_0^t\max\{m^\sharp(s),\,-m^\flat(s)\}\,ds\]
 and
 \[
 m^\flat(t)\ge m^\flat(0)+\int_0^t\min\{-m^\sharp(s),\,m^\flat(s)\}\,ds\,.
 \]
 Therefore, we have
 \[\max\{m^\sharp(t),\,-m^\flat(t)\}\le \max\{m^\sharp(0),\,-m^\flat(0)\}+\int_0^t\max\{m^\sharp(s),\,-m^\flat(s)\}\,ds\,.\]
Therefore,  $\max\{m^\sharp(t),\,-m^\flat(t)\}$ satisfies a Gronwall's inequality which leads to
 \[\max\{m^\sharp(t),\,-m^\flat(t)\}\le e^t\,\max\{m^\sharp(0),\,-m^\flat(0)\}\le e^t\,M_0\quad\text{ for all }\;\; t\in(0,t']\,.\]
 This contradicts \eqref{earliest}. Hence the lemma is proved.
\end{pf}

The upper bound of $h$ in  \eqref{max:h} allows us to define the following trapezoidal regions, similar to \eqref{def:A:trap}, in the spirit of domain of dependence and domain of  influence.
\begin{align}\label{def:Ombw:trap} \Obw&:=\Big\{(t,\xi)\in [0,T]\times\bbR\,\Big|\,  |\xi|\le N+(T-t)\big[\vartheta(e^T(\CR  + 1))\big]^{\gamma+1\over2}\Big\},\\
\label{def:Omfw:trap} \Ofw&:=\Big\{(t,\xi)\in [0,T]\times\bbR\,\Big|\,  |\xi|\le N+t\big[\vartheta(e^T(\CR  + 1))\big]^{\gamma+1\over2}\Big\}.\end{align}
\begin{center}
\includegraphics[height=6cm, width=7.5cm]{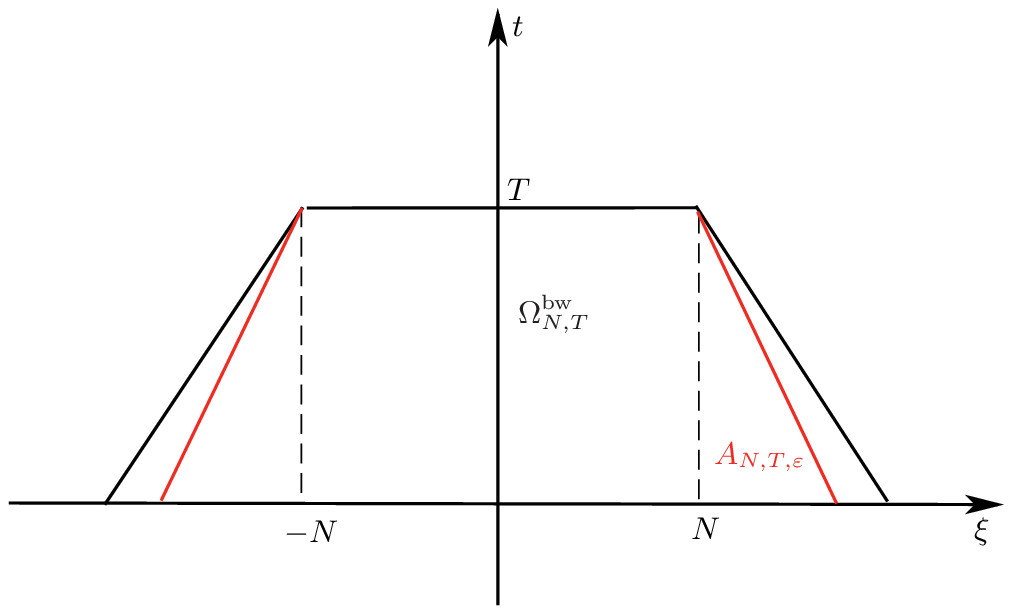}
\includegraphics[height=6cm, width=7.5cm]{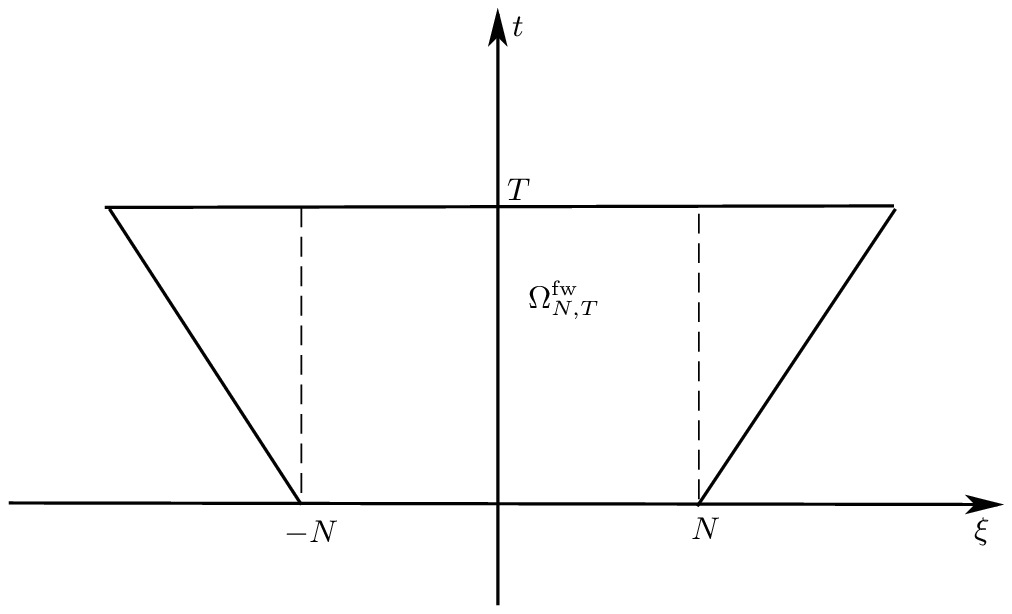}\\
{  Fig. 1\,\,\,\, $\Obw$ and $A_{N,T,\ep}$} \hspace{3cm} {Fig. 2 \,\,\,\, $\Ofw$}

\end{center}

Under the assumptions of Lemma \ref{lem:up:h}, we have that characteristics  with speed $\pm\htt$ or $0$ emitting from within $\Obw$ (resp. $\Ofw$) and going {backward} (resp. {forward}) in time always stay within $\Obw$ (resp. $\Ofw$) till $t=0$ (resp. $t=T$).

Note that it is crucial that we shall also prove the  lower bound of $h$ to be strictly above $0$, which will be dealt with later.
\subsection{Dynamics of gradients of Riemann invariants}
Here, we follow the original idea of Lax (\cite{Lax}) to study the dynamics of gradients of Riemann invariants
and will further reformulate the system inspired by the method in \cite{TW} by Tadmor and Wei. Note that despite the similarity of our equations with those of \cite{TW}, their ODEs \cite[(3.12)]{TW} for weighted gradients of the Riemann invariants do not have a term that corrsponds to $1/h$ term in our ODEs \eqref{Z:n}. This is in fact one of the main technical difficulties we have to tackle here.

 First, differentiating the first
two equations of \eqref{R:sys} with respect to $\xi$ gives 
\be\label{D:R:1}\left\{\begin{aligned}&D_1 (\pxi R_1)-\pxi(\htt)(\pxi R_1)=\pxi R_3,\\
&D_2 (\pxi R_2)+\pxi(\htt)(\px R_2)=\pxi R_3,\end{aligned}\right. \ee
where \[D_1:=\pt-\htt\pxi\quad\text{and}\quad  D_2:=\pt+\htt\pxi.\]

It follows from  \eqref{K:gamma:def} and \eqref{h:form}   that one has
\[
\pxi h= {1\over \cKg'(h)}\,{\pxi R_2 - \pxi R_1 \over2} =h^{-{\gamma-3\over2}}\,{\pxi R_2 - \pxi R_1 \over2},
\]
 so
\[\pxi(\htt)={\gamma+1\over2}h^{\gamma-1\over2}\,\pxi h={\gamma+1\over2}\,h\,{\pxi R_2 - \pxi R_1 \over2}.\]
 Combine this with potential vorticity conservation \eqref{ICvorticity}  and transform \eqref{D:R:1} into
 \begin{equation}\label{Y1Y2}
 \left\{
 \begin{aligned}
 &D_1 (\pxi R_1)={\gamma+1\over4}h\,(\pxi R_2-\pxi R_1)(\pxi R_1)+\PVin-{1\over h},\\
 &D_2 (\pxi R_2)= {\gamma+1\over4}h\,( \pxi R_1-\pxi R_2 )(\pxi R_2)+\PVin-{1\over h}.
 \end{aligned}
 \right.
 \end{equation}

We also use $\pxi R_1,\pxi R_2 $ and the first  equation  in \eqref{p:hinv} to rewrite the dynamics of $h$ as
\be\label{h:RI:grad}\pt h+{h^2(\pxi R_2+\pxi R_1)\over2}=0.\ee
It follows from the definition of $R_1$ and $R_2$ that one has
  \begin{equation}\label{eq441}
  {h^2(\pxi R_2-\pxi R_1)\over2}=h^2 \pxi\cKg(h)=h^{\gamma+1\over2}\pxi h.
  \end{equation}
Substituting \eqref{eq441} into \eqref{h:RI:grad} gives
\be \label{h:Y} D_1 h   =-h^2   \pxi R_2 \,\;\quad\text{ and equivalently }\quad\;
 D_2  h =-h^2 \pxi R_1\,.\ee

Recall the definitions of Riemann invariants in \eqref{R:def} and of weighted gradients of Riemann invariants  $Z_j$ ($j=1$, $2$) in \eqref{def:Z} to rewrite
\[Z_1= \sqrt h\,\pxi R_1 \quad\text{and}\quad  Z_2=\sqrt h\,\pxi R_2\,.\]
Then, combine    \eqref{Y1Y2} and \eqref{h:Y} to derive dynamics of $Z_j$ along the characteristics,
\be \label{Z:n}
D_j Z_j={{\sqrt h}}\left[ -\gp Z_j^2+ \gm Z_1Z_2+ \PVin(\xi) -{1\over h}\right],\qquad j=1,2,\ee
where $\gm={\gamma-1\over4}\ge 0$. Furthermore, it follows from \eqref{h:RI:grad} that we have
\be \label{h:Z}\pt h=-{1\over2}\,h^{3/2}\,(Z_1+Z_2),\;\;\text{ i.e. }\;\;  \pt {1\over {\sqrt h}}   = {1\over4}(Z_1+Z_2).\ee
\subsection{Upper bound for weighted gradients of Riemann invariants}
The following lemma uses the above   formulation in terms of the weighted gradients of Riemann invariants to
show an  upper bound of $Z_j$ and consequently a positive lower bound of $h$.

\begin{lem}\label{lem:up:low}
Fix $T>0$. Under the same assumptions and notations as in Lemma \ref{lem:up:h},
  we have that at any $t\in[0,T]$,
  \be\label{sup:Z}Z_j\le\Wins:=\max\Big\{\Zins ,\,\sqrt{2 \PVins
 }\Big\} \qquad\text{for }\;\; j=1,2, \ee
 and
\be\label{inf:h}  h\ge \Big[{1\over\sqrt{ \inf_{\xi }{  h_0} }}+{t\over2}\Wins \Big]^{-2}   .\ee\end{lem}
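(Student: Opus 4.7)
My plan is to obtain the upper bound \eqref{sup:Z} first by a maximum principle argument applied to the ODEs \eqref{Z:n} along characteristics, and then deduce \eqref{inf:h} almost immediately from \eqref{h:Z}.

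\textbf{Step 1 (key pointwise inequality).} At any point $(t,\xi)$, let $j\in\{1,2\}$ be an index achieving $Z_j(t,\xi)=\max\{Z_1(t,\xi),Z_2(t,\xi)\}$. If moreover $Z_j(t,\xi)\ge 0$, then a simple case analysis (the other index $Z_i$ being either non-negative, in which case $Z_iZ_j\le Z_j^2$, or negative, in which case $Z_iZ_j\le 0\le Z_j^2$) shows $Z_1Z_2\le Z_j^2$. Substituting into \eqref{Z:n} collapses the two quadratic terms and yields
\be\label{planZbd}
D_j Z_j \;\le\; \sqrt h\,\Big[-\tfrac12 Z_j^2 + \PVins - \tfrac1h\Big] \;\le\; \sqrt h\,\Big[-\tfrac12 Z_j^2 + \PVins\Big].
\ee
In particular, whenever $Z_j\ge\sqrt{2\PVins}$, the right side of \eqref{planZbd} is non-positive.

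\textbf{Step 2 (maximum principle via the $\epsilon$-trick).} Since $(h_0-1,u_0,v_0)\in C^1_0(\mR)$, the quantities $Z_1,Z_2$ vanish outside a compact set at $t=0$, and by the finite speed of propagation established through Lemma \ref{lem:up:h} and the region $\Obw$, they remain compactly supported within $\Obw$ for all $t\in[0,T]$; in particular the supremum in the definition of $\Wins$ is actually attained. For fixed $\epsilon>0$ consider the auxiliary function $\max_j Z_j(t,\xi)-\epsilon t$. Suppose for contradiction that $\max_j Z_j(t,\xi)>\Wins+\epsilon t$ somewhere; then by continuity and compact support there is a first time $t_0>0$ at which the equality $Z_{j^\star}(t_0,\xi_0)=\Wins+\epsilon t_0$ is achieved for some index $j^\star$ and some $\xi_0$. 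Along the characteristic of the $j^\star$-th field through $(t_0,\xi_0)$, the function $Z_{j^\star}(s,\xi_{j^\star}(s))-\epsilon s$ attains an interior maximum at $s=t_0$, so $D_{j^\star}Z_{j^\star}(t_0,\xi_0)\ge \epsilon$. On the other hand, at $(t_0,\xi_0)$ we have $Z_{j^\star}=\Wins+\epsilon t_0\ge\Wins\ge\sqrt{2\PVins}\ge 0$ and $Z_{j^\star}\ge Z_i$ for the other index, so Step 1 applies and gives $D_{j^\star}Z_{j^\star}(t_0,\xi_0)\le 0<\epsilon$, a contradiction. Letting $\epsilon\searrow 0$ yields \eqref{sup:Z}.

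\textbf{Step 3 (lower bound for $h$).} Having \eqref{sup:Z} in hand, \eqref{h:Z} gives the pointwise inequality
\[
\pt\!\Big(\tfrac{1}{\sqrt h}\Big) \;=\; \tfrac14(Z_1+Z_2) \;\le\; \tfrac12\,\Wins,
\]
so integrating in $t$ with initial datum $1/\sqrt{h_0(\xi)}\le 1/\sqrt{\inf_\xi h_0}$ produces \eqref{inf:h} directly.

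\textbf{Anticipated main obstacle.} The genuine issue is ensuring that a maximum is actually attained so that the characteristic comparison at $(t_0,\xi_0)$ is legitimate; this is what forces the use of compact support in $\xi$ together with Lemma \ref{lem:up:h} and the trapezoidal domain $\Obw$, and it is also why we introduce the drifted barrier $\Wins+\epsilon t$ rather than arguing directly at $\Wins$, since equality $D_{j^\star}Z_{j^\star}=0$ (e.g.\ when $\PVins=1/h$ exactly and $Z_{j^\star}=\sqrt{2\PVins}$) could otherwise prevent a clean contradiction. The coupling of the two characteristic fields in the quadratic term $\gm Z_1Z_2$ is handled cleanly by always testing at the larger of $Z_1,Z_2$, exploiting the identity $(\gm+\tfrac12)-\gm=\tfrac12$ that eliminates the $\gm$-dependence from the effective rate in \eqref{planZbd}.
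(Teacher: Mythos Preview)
Your approach matches the paper's: both prove \eqref{sup:Z} by a maximum principle for \eqref{Z:n} along characteristics, exploiting that $-\gp Z_j^2+\gm Z_1Z_2\le -\tfrac12 Z_j^2$ whenever $Z_j=\max\{Z_1,Z_2\}\ge0$, and both deduce \eqref{inf:h} by integrating \eqref{h:Z}. The paper's execution is more direct: it simply takes the maximum of $\{Z_1,Z_2\}$ over the compact space--time trapezoid $\Obw$; if the maximum sits at $t'=0$ it is $\le\Zins\le\Wins$, and if at $t'>0$ then $D_jZ_j(t',\xi')\ge0$ together with \eqref{Z:n} forces $-\tfrac12 Z_j^2+\PVins\ge 1/h>0$, whence $Z_j<\sqrt{2\PVins}\le\Wins$. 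No barrier or $\epsilon$-trick is needed---the $-1/h$ term you discard in \eqref{planZbd} already supplies the strict sign that your ``Anticipated main obstacle'' worries about.

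Your first-crossing argument has one small gap: the assertion that $t_0>0$ is not justified when $\Zins=\Wins$ (equivalently $\Zins\ge\sqrt{2\PVins}$), since the drifting barrier $\Wins+\epsilon t$ is then already touched at $t=0$ and nothing you wrote rules out immediate crossing. A clean fix is to replace the barrier by the constant $\Wins+\epsilon$, so that $\Zins\le\Wins<\Wins+\epsilon$ strictly at $t=0$; then $t_0>0$ automatically, and at $(t_0,\xi_0)$ you have $Z_{j^\star}=\Wins+\epsilon>\sqrt{2\PVins}$ strictly, so your Step~1 gives $D_{j^\star}Z_{j^\star}<0$ against $D_{j^\star}Z_{j^\star}\ge0$ from the backward characteristic. (Also, ``interior maximum'' is a misnomer: you only control $Z_{j^\star}$ along the characteristic on $[0,t_0]$, so the correct inference is that the one-sided derivative from the left is nonnegative at $t_0$.)
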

\begin{pf} Consider any large but compact region   $\Obw$ as defined in \eqref{def:Ombw:trap}. It is a domain  of dependence for its every time slice. Then it follows from  \eqref{h:Z} and $ \inf_{\xi}h_0>0$ that $h$ is always positive in $\Obw$. Now,
  it suffices to show
 \be\label{sup:Z:1} \displaystyle\max_{\Obw}\{Z_1,Z_2\}\le\Wins\,,\ee
 and
 \be\label{inf:h:1}  \displaystyle\max_{\Obw}{1\over {\sqrt h} }\le {1\over \sqrt{  \inf_{\xi }{  h_0}  }}+{t\over2}\Wins\,.\ee

Noting that $\Zins>0$,
we assume without loss of generality that at some   $(t',\xi')\in\Obw$,
\begin{equation}\label{eq521}
Z_1(t',\xi')=\max_{\Obw}\{Z_1,Z_2\}>0.
\end{equation}
If the maximum in \eqref{eq521} is attained at $t'=0$, then the estimate  \eqref{sup:Z:1} is apparently true. Otherwise, the maximum in \eqref{eq521} is achieved for  $t'>0$. Therefore, one has  $D_1Z_1{(t',\xi')}\ge0$. This, together with \eqref{Z:n}, yields
\[{{\sqrt h}}\left[ -\gp Z_1^2+ \gm Z_1Z_2+ \PVin(\xi) -{1\over h}\right]\ge 0\quad\text{at }(t',\xi').\]
Since $Z_1(t',\xi')\ge Z_2(t',\xi')$ and $Z_1(t', \xi') >0$, we have
\[ -{1\over2} Z_1^2 + \PVins\ge {1\over h}>0\quad\text{at }(t',\xi').\]
This  proves  the estimate \eqref{sup:Z:1}. The estimate \eqref{inf:h:1} is a direct consequence of   equation  \eqref{h:Z}  and the  estimate \eqref{sup:Z:1}.
\end{pf}

 Although the results proved so far are regarding {closed} time interval $[0,T]$, this is not so essential. In fact, for any small $\ep>0$, we can replace every $T$ by $T-\ep$ in Lemmas \ref{lem:up:h} and \ref{lem:up:low} and still obtain the corresponding estimates. Since these estimates are regardless of $\ep$, we can let $\ep$ approach zero and establish that
 all estimates in Lemmas \ref{lem:up:h} and \ref{lem:up:low} are still valid if we replace every occurrence of $[0,T]$ by $ [0,T) $ in their conditions and conclusions.
 Now, we obtain the following corollary that characterizes the type of possible singularities that a solution may develop.

\begin{corollary}\label{cor:Tmax:open}
Given the same type of initial conditions as in Lemma \ref{lem:up:h}, suppose a $C^1$ solution exists over time interval $[0,T]$ (resp. $[0,T)$). Then,  for all $t\in[0,T]$ (resp. $t\in [0,T) $) and all $\xi\in\mR$, we have $(h,|u|,|v|, |\pxi v|)$ { as well as $Z_1$ and $Z_2$} to be   uniformly bounded from above,  and have $h$ to be  uniformly   bounded from below by a positive constant.

Furthermore, if a classical solution indeed loses $C^1$ regularity at a finite time $t=T^\sharp$, then $\inf_{\xi}\{Z_1, Z_2\}\to -\infty$ as $t\to T^\sharp$ while $Z_1$ and $Z_2$ remain bounded from above.
\end{corollary}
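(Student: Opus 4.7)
The corollary essentially packages the estimates of Lemmas \ref{lem:up:h} and \ref{lem:up:low} into a statement about the blowup alternative, so the main task is to read off the right bounds and then argue by contradiction for the second part. The plan is as follows.

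For the first part, I would start with Lemma \ref{lem:up:h}, which directly gives uniform upper bounds on $|R_1|,|R_2|,|R_3|$ on $[0,T]$. Using the inversion $u=(R_1+R_2)/2$, $v=R_3$, and $h=\vartheta((R_2-R_1)/2)$ from \eqref{h:form}, this immediately yields upper bounds for $h$, $|u|$ and $|v|$. The upper bounds for $Z_1,Z_2$ and the uniform positive lower bound for $h$ come from Lemma \ref{lem:up:low}. Finally, for $|\pxi v|$, I would invoke the conservation of potential vorticity \eqref{ICvorticity}, which gives $\pxi v = \PVin(\xi) - 1/h$; since $\PVin$ is bounded by $\PVins$ and $h$ is uniformly bounded away from zero and infinity, $|\pxi v|$ is controlled. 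To cover the half-open case $[0,T)$, I would simply apply the already-justified trick of replacing $T$ by $T-\ep$ in Lemmas \ref{lem:up:h}–\ref{lem:up:low}, noting that every bound obtained is independent of $\ep$.

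For the second part, I would argue by contrapositive using a standard continuation principle. Assume $C^1$ existence up to $T^\sharp$ and that $\inf_{\xi}\{Z_1,Z_2\}$ stays bounded from below on $[0,T^\sharp)$. The first part already controls $Z_1,Z_2$ from above, so $Z_1,Z_2$ are uniformly bounded on $[0,T^\sharp)\times\mR$. From the definitions in \eqref{def:Z} I can then solve for the spatial derivatives:
\begin{equation*}
\pxi u = \frac{Z_1+Z_2}{2\sqrt{h}}, \qquad \pxi h = \frac{h^{(3-\gamma)/2}(Z_2-Z_1)}{2},
\end{equation*}
both of which are uniformly bounded thanks to the two-sided bounds on $h$ in the first part. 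The relation $\pxi v=\PVin-1/h$ already gives control of $\pxi v$. The time derivatives $\pt h, \pt u, \pt v$ are then bounded directly from the system \eqref{p:hinv}. Hence $(h,u,v)$ is uniformly bounded in $C^1$ on $[0,T^\sharp)\times\mR$, with $h$ staying above a positive constant, which by a standard local-existence/continuation theorem for \eqref{p:hinv} allows extension of the $C^1$ solution past $T^\sharp$, contradicting the assumption that $T^\sharp$ is the $C^1$ breakdown time. Therefore $\inf_{\xi}\{Z_1,Z_2\}\to-\infty$ as $t\nearrow T^\sharp$, while the upper bounds on $Z_1,Z_2$ from Lemma \ref{lem:up:low} persist on the whole interval $[0,T^\sharp)$.

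The essentially nontrivial step is the continuation principle in the last paragraph: one must have at hand a local well-posedness result for \eqref{p:hinv} ensuring that uniform $C^1$ bounds together with a uniform positive lower bound on $h$ (to keep the system strictly hyperbolic and the Lagrangian transformation non-degenerate) are enough to extend the classical solution. Once this is accepted as standard for such balance laws, the corollary follows essentially as a bookkeeping consequence of the two preceding lemmas together with the potential-vorticity identity.
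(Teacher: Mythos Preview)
Your proof is correct and matches the paper's approach: the corollary is stated there without a separate proof, being understood as an immediate consequence of Lemmas \ref{lem:up:h} and \ref{lem:up:low}, the potential-vorticity identity \eqref{ICvorticity}, the $T\mapsto T-\ep$ device, and the standard continuation principle for strictly hyperbolic systems---exactly the ingredients you assemble. One trivial slip: from \eqref{def:Z} one gets $\pxi h = h^{(2-\gamma)/2}(Z_2-Z_1)/2$, not $h^{(3-\gamma)/2}$, but this does not affect the argument since $h$ is two-sided bounded.
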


\section{Formation of Singularities}\label{sec:shock}

Let us recollect the bounds obtained so far, independent of the size of initial data, we have obtained
 upper bound \eqref{max:h} and positive lower bound \eqref{inf:h} for $h$; and
upper bound \eqref{sup:Z} for $Z_1$ and $Z_2$. Since the solution itself is always bounded as proved in Lemma \ref{lem:up:h}, the only possible singularity for a classical solution is for $Z_1$ or $Z_2$ approaching $-\infty$.

In this section, we first prove a threshold using comparison principle, so that if $\inf_\xi\{Z_1,Z_2\}$ is equal to or below this threshold at some time, then it  will approach $-\infty$ at some late finite time. Next, we   {impose} this  threshold as an additional lower bound on $ Z_1,Z_2 $ and prove a singularity formation with  initial data which can have arbitrarily small gradients. A key and novel technique is to combine the lower and upper   bounds of $Z_1$, $Z_2$  and the conservation of \emph{physical energy}   to control the positive terms in the equations for  $D_1Z_1$, $D_2Z_2$ so that the decay of $\inf_\xi\{Z_1,Z_2\}$ is sufficient for it to reach the  threshold that has been just proved. This then  eventually leads to loss of $C^1$ regularity in  finite time.


We have the following important comparison principle for the infimum of $Z_j(t,\cdot)$.
\begin{lem}[Strict comparison principle]\label{lem:comp}
Fix $\Tm>0$. Consider a classical solution  $$(h,u,v)\in C^1([0,\Tm)\times\bbR) $$ to the rotating shallow water    system \eqref{p:hinv} with  $C_0^1$ initial data $(h_0-1,u_0,v_0)$ so that
 $\inf_{\xi }h_0>0$.

Let  a function $m(t)\in C^1([0,\Tm))$   satisfy  the following strict differential inequality and  initial condition
\be\label{m:lem}
 \left\{
 \begin{aligned}
 &  \sup_{\xi\in\bbR} {\sqrt{h(t,\xi) } }\Big[-{1\over2}m^2(t)+\PVins-{1\over  {h(t,\xi) }}\Big]<{d\over dt}m(t) <0 \,,\\
 &   \inf_\xi \min_{j=1,2}Z_j(0,\xi) \le m(0)<0  \,.
 \end{aligned}
 \right.
\ee
 Then,   for any $t\in(0,\Tm)$,
 \be\label{Z:m}  \inf_{ \xi\in \mR }\min_{j=1,2}Z_j(t,\xi)< m(t)\,.\ee
\end{lem}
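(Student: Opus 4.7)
The plan is to prove a Dini-type strict comparison for
$W(t):=\inf_\xi\min_{j=1,2} Z_j(t,\xi)$
against the barrier $m(t)$. Because the $C^1_0$ initial perturbation and the finite speed of propagation (as in the trapezoids \eqref{def:Ombw:trap}) keep $Z_1,Z_2$ uniformly continuous in $(t,\xi)$ and vanishing outside a compact $\xi$-interval on every compact time slab $[0,T']\subset[0,\Tm)$, the function $W$ is continuous in $t$ and, whenever $W(t)<0$, the infimum is attained at some interior point $\xi^*\in\mR$ at which, after relabeling the index $j$, $Z_1(t,\xi^*)=W(t)$ and $Z_2(t,\xi^*)\ge W(t)$.

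The key ingredient I would establish is the one-sided derivative bound
\[
D^+ W(t)\ \le\ F(W(t),t),\qquad F(w,t):=\sup_{\xi\in\mR}\sqrt{h(t,\xi)}\Big[-\tfrac12 w^2+\PVins-\tfrac{1}{h(t,\xi)}\Big],
\]
valid whenever $W(t)<0$. At $(t,\xi^*)$ the ODE \eqref{Z:n} for $D_1 Z_1$ contains a cross term $\gm Z_1 Z_2$, which I would tame via a case split exploiting $\gm\ge 0$ and $\gp-\gm=\tfrac12$: if $Z_2(t,\xi^*)\ge 0$, the cross term is non-positive while $-\gp Z_1^2\le-\tfrac12 Z_1^2$ (since $\gp\ge\tfrac12$); if $Z_1\le Z_2<0$ at $\xi^*$, minimality gives $|Z_2|\le|Z_1|$, so $Z_1 Z_2\le Z_1^2$ and the combined coefficient collapses to $-(\gp-\gm)=-\tfrac12$. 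In either case $D_1 Z_1\big|_{(t,\xi^*)}\le F(W(t),t)$, and integrating along the 1-characteristic through $(t,\xi^*)$ while using $W(t+\delta)\le Z_1(t+\delta,\cdot)$ evaluated at that characteristic delivers the desired Dini bound.

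With that bound in hand, the strict comparison is essentially standard. Since $\dot m<0$ and $m(0)<0$ force $m(t)<0$ throughout, and since $F(\cdot,t)$ is strictly increasing on $(-\infty,0)$, one has $F(w,t)\le F(m(t),t)<\dot m(t)$ whenever $w\le m(t)$. In the case $W(0)<m(0)$ I would pick $\epsilon\in(0,m(0)-W(0))$ and compare $W$ with the shifted barrier $m^\epsilon(t):=m(t)-\epsilon$, which still satisfies the strict differential inequality and the initial constraint; a barrier argument at $\tau_*:=\sup\{t: W(t)\le m^\epsilon(t)\}$ produces a contradiction at $\tau_*$ from $W(\tau_*)=m^\epsilon(\tau_*)$ combined with $D^+(W-m^\epsilon)(\tau_*)<0$, yielding $W(t)\le m^\epsilon(t)<m(t)$ on $[0,\Tm)$. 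The borderline case $W(0)=m(0)$ is handled by applying the Dini bound directly at $t=0$ to get $D^+(W-m)(0)<0$; this produces some $t_1>0$ with $W(t_1)<m(t_1)$, and then the previous case starting at $t_1$ finishes the proof.

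The main technical obstacle I anticipate is establishing the Dini upper derivative bound on the infimum $W$ itself, in particular ruling out that the cross term $\gm Z_1 Z_2$ could spoil the estimate when $Z_2(t,\xi^*)$ is large and negative. The algebraic identity $\gp-\gm=\tfrac12$, together with the minimality $Z_2\ge Z_1$ at $\xi^*$, is exactly what reduces the effective quadratic coefficient to $-\tfrac12$ and lines up the estimate precisely against the $-\tfrac12 m^2$ appearing in the hypothesis; the strictness of the inequality on $\dot m$ is then what powers both the borderline $t=0$ case and the barrier contradiction at $\tau_*$.
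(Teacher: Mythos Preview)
Your proposal is correct and shares the paper's central algebraic step: at a point where $Z_1$ realizes the minimum (so $Z_1\le Z_2$ and $Z_1<0$), the identity $\gp-\gm=\tfrac12$ together with $\gm Z_1 Z_2\le \gm Z_1^2$ collapses $-\gp Z_1^2+\gm Z_1Z_2$ to at most $-\tfrac12 Z_1^2$, which is exactly what lines up against $-\tfrac12 m^2$ in the hypothesis. The difference lies only in how the comparison is executed. The paper also takes $T_0$ to be the first time equality $\Zf(T_0)=m(T_0)$ could occur, but instead of a Dini-derivative argument it picks $t_0<T_0$ with $\Zf(t_0)<m(t_0)$, integrates \eqref{Z:n} along the $1$-characteristic from $(t_0,\xi_0)$ to $T_0$, applies the intermediate value theorem to locate some $\tau\in(t_0,T_0)$ where the differential inequality is violated, and then uses uniform continuity of $Z_j,h$ along the characteristic to transport that violation back to $t_0$, contradicting the strict inequality \eqref{m:lem} (with a uniform $\varepsilon$-gap extracted beforehand on $[0,T_0]$). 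Your route---first isolating the pointwise bound $D^+W(t)\le F(W(t),t)$ via the characteristic through the minimizer, then running a textbook shifted-barrier argument with $m^\epsilon=m-\epsilon$---is more modular and avoids the IVT/uniform-continuity detour; the paper's route is more hands-on but reaches the same contradiction. One cosmetic point: your $\tau_*$ should be defined as the \emph{infimum} of times where $W>m^\epsilon$ (the first crossing), not the supremum of times where $W\le m^\epsilon$; with that fix the contradiction $D^+(W-m^\epsilon)(\tau_*)<0$ versus $W-m^\epsilon$ increasing through zero at $\tau_*$ goes through exactly as you describe.
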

We recall that the upper and positive lower bounds of $h $ have been established, so the left hand side of \eqref{m:lem}  is always well-defined.

\begin{pf} With compactly supported initial data $(h_0-1,u_0,v_0)$,  by the bounds of $h$, which leads to the finite propagation speed of the solutions, we have that  $Z_j(t,\cdot)$ ($j=1$, $2$) is also compactly supported, so \be\label{def:ZbN}\Zf(t):=\inf_{ \xi\in\mR }\min_{j=1,2}Z_j(t,\xi) \quad\text{for } t\in [0, \Tm)\,,
\ee
is a well-defined continuous function as long as the $C^1$ solution exists.

Since $\Zf(0)\leq m(0)$ and the initial data has compact support, without loss of generality, there exists a $\xi'\in \mR$ such that $Z_1(0, \xi')\leq  m(0)<0$ and $Z_1(0, \xi')\leq Z_2(0, \xi')$. Hence one has
\begin{equation*}
\begin{aligned}
& \sqrt{h}\left(-\frac{1}{2}Z_1^2 +\bar\gamma Z_1(Z_2-Z_1)+\PVins -\frac{1}{h}\right)(0,\xi')\\
\leq & \sqrt{h}\left(-\frac{1}{2}Z_1^2 +\PVins -\frac{1}{h}\right)(0,\xi')\\
\leq & \sqrt{h}(0, \xi')\left(-\frac{1}{2}m^2(0) +\PVins -\frac{1}{h}(0,\xi')\right).
\end{aligned}
\end{equation*}
This, together with the equations \eqref{Z:n} and  \eqref{m:lem}, implies
 \[D_1Z_1(0,\xi')< {d\over dt}m(0),\]
 where the characteristic curve associated with $D_1Z_1$   emits from $( 0,\xi')$. Therefore,  there exists   a time  $T_0\in(0,T]$ so that
\be\label{ZfN:m:0}\Zf(t)< m(t)\quad\text{ for all }\;\; t\in(0,T_0).\ee
Noting that the inequality in \eqref{ZfN:m:0} is a strictly inequality in an  open interval $(0, T_0)$, we can  choose $T_0$ to be the supremum of all such time in $(0, T)$.
Then, in order to show \eqref{Z:m}, it
 suffices to prove
   $T_0=T$. We prove it by contradiction.

Suppose  $T_0<T$.  Then, we must have $m(T_0)=\Zf(T_0)$ so that by the definition of $\Zf$,
\be\label{m:Z:1}m(T_0) \le Z_j(T_0,\xi),\;\; \text{ for any }j=1,2\,\,\text{ and any }\xi\in\mR \,.\ee

Next, note that the differential inequality in  \eqref{m:lem} is valid in the closed interval $[0,T_0]$ with all its terms being continuous and with $h$ bounded from below by a positive constant. Therefore, there exists an $\ep>0$ so that
\be\label{m:lem:ep}  \sup_{\xi\in\bbR} {\sqrt{h(t,\xi) } }\Big[-{1\over2}m^2(t) +\PVins-{1\over  {h(t,\xi) }}\Big]<{d\over dt}m(t)-\ep\,,\quad\text{for all }\; t\in[0,T_0]\,.\ee

Now, for any $t_0\in(0,T_0)$, by \eqref{ZfN:m:0} and solution being compactly supported,  we assume without loss of generality that
\be\label{m:Z:2}Z_1(t_0,\xi_0)=\Zf(t_0)<m(t_0)<0\qquad\text{for some }\;\; \xi_0\in\mR \,,\ee
where the last inequality is due to the  assumption that both $m(0)$ and $m'(t)$ are negative.

Let $\Xi(t)$ be the solution of the following Cauchy problem for ODE
\[
\left\{
\begin{aligned}
&{d\over dt}\Xi(t)=-h^{\gamma+1\over2}(t,\Xi(t)),\\
& \Xi(t_0)=\xi_0.
\end{aligned}
\right.
\]
Hence $\Big\{(t,\Xi(t))\,\Big|\,t_0\le t\le T_0 \Big\}$ is the   characteristic curve associated with $D_1Z_1$.
By \eqref{m:Z:1}, \eqref{m:Z:2}, we have
\[
m(T_0)-m(t_0)  <    Z_1(T_0,\Xi(T_0))-Z_1(t_0, \xi_0)\,,
\]
which is equivalent to
\[
\int_{t_0}^{T_0}m'(t)\,dt<
 \int_{t_0}^{T_0}D_1Z_1\,dt.
 \]
Apply the equations \eqref{Z:n} for $Z_1$ to arrive at
\[\int_{t_0}^{T_0} \left\{m'(t)-
 {\sqrt h}\Big[-\gp Z_1^2+\gm Z_1Z_2+\PVin-{1\over h}\Big](t, \Xi(t))\right\}\, dt\,<0.  \]
 Since $m(t)\in C^1$, applying the intermediate value theorem  yields that for some $ \tau(t_0)\in(t_0,T_0)$,
\be\label{IMV:m}
 m'(\tau(t_0))-{\sqrt h}\Big[-\gp Z_1^2+\gm Z_1Z_2+\PVin-{1\over h}\Big](\tau(t_0),\Xi(\tau(t_0))) < 0\,.
 \ee
 Note that by the definition of $Z_1,Z_2$, the positive lower bound of $h$,  the hypothesis that $(h, u, v)$ is $C^1$,  and   the fact that the curve $(t,\Xi(t))$ is $C^1$ and contained in a compact region, we must have $Z_1(t,\Xi(t))$, $Z_2(t,\Xi(t))$,  $\sqrt{h(t,\Xi(t))}$, and $1/h(t,\Xi(t))$ to be
uniformly continuous functions of $t$ over  $[t_0,T_0]$ and the modulus of continuity is independent of the choice of $t_0$.
Therefore, for the same $\ep$ as in \eqref{m:lem:ep}, we can choose  $T_0-t_0$ to be sufficiently small, making  $\tau(t_0)-t_0$ even smaller, so that by \eqref{IMV:m},
\[
 m'(t_0)<{\sqrt h}\Big(-\gp Z_1^2+\gm Z_1Z_2+\PVin-{1\over h}\Big)(t_0,\Xi(t_0)) +{\ep}\,.
 \]
It follows from \eqref{m:Z:2} that one has
\begin{equation*}
\begin{aligned}
m'(t_0) <{\sqrt {h( t_0,\xi_0)}}\Big[-{ {1\over2} } m^2 (t_0)+ \PVin-{1\over h( t_0,\xi_0)}\Big]+{\ep}\,.
\end{aligned}
\end{equation*}
 This is a contradiction to \eqref{m:lem:ep}. The proof of the lemma is completed.
 \end{pf}

  \subsection{Existence of a threshold for formation of singularity}\label{sec31}
Now we prove Theorem \ref{thm:large:sing}, which shows that
the loss of $C^1$ regularity always takes place in finite time, provided at  some time $t$,  the minimum of $Z_j$ is below the time-independent threshold $-\sqrt{2\,\PVins}$.

\begin{proof}[Proof of Theorem \ref{thm:large:sing}]
It suffices to consider $T'=0$.

It follows from  Lemmas \ref{lem:up:h} and \ref{lem:up:low} (the estimates \eqref{max:h} and \eqref{inf:h}) that we have
\begin{equation}\label{defthetaflat}
 \sup_{\xi\in \bbR}h(t,\xi)\le  \theta^\sharp (t) \quad \text{and}\quad  \inf_{\xi\in \bbR} h(t,\xi)\ge  \Big[{1\over \sqrt{  \inf_{\xi }{  h_0}  }}+{t\over2}\Wins \Big]^{-2}=:\theta^\flat (t) \,,
\end{equation}
respectively.
Apparently $\theta^\sharp\ge\theta^\flat>0$. Now, Let $m(t)$ be the solution of the following Cauchy problem
\be\label{m:large:ODE}
{d\over dt}m(t)={\sqrt{\theta^\flat (t)} }\Big[-{1\over2}m^2(t)+\PVins-\frac{1}{2}{1\over  {\theta^\sharp (t)}}\Big]
\ee
and
\begin{equation}\label{m:large:init}
m(0)= \inf_{ \xi \in \mR }\min \big\{Z_1(0,\xi),Z_2(0,\xi)\big\}\le -\sqrt{2\PVins}.
 \end{equation}
It is easy to see that $m(t)$ is strictly decreasing and  satisfies the assumptions of Lemma \ref{lem:comp} as long as it remains finite. Therefore,
\[
\inf_{\xi}\min_{j=1,2} Z_j(t,\xi) < m(t).
\]

By monotonicity of $m(t)$, there exists a $T_1>0$ such that
\[
{1\over2}m^2(T_1)-\PVins=:a>0.
\]
Hence for any $t> T_1$, one has
\be\label{m:at:T_1}
m(t)\le m(T_1)= -\sqrt{2\PVins+2a}<-\sqrt{2\PVins}.
\ee
It follows from \eqref{m:large:ODE} that the following differential inequality holds
\begin{equation}\label{neweq57}
{d\over dt}m(t)<{\sqrt{\theta^\flat (t)} }\Big[-{1\over2}m^2(t)+\PVins \Big].
\end{equation}
Using partial fractions yields
  \[
  \frac{dm}{m-\sqrt{2\PVins}}-\frac{dm}{m+\sqrt{2\PVins}}< -\sqrt{2 \PVins \theta^\flat(t)}\,dt.
\]
Integrate this inequality from $T_1$ to $t>T_1$ with relevant signs determined by   \eqref{m:at:T_1},
\[
\ln\,\frac{m(t)-\sqrt{2\PVins}}{m(t)+\sqrt{2\PVins}}  -\ln\,   \frac{m(T_1)-\sqrt{2\PVins}}{m(T_1)+\sqrt{2\PVins}} < -\sqrt{2\PVins}\int_{T_1}^t \sqrt{  \theta^\flat(s)}\,ds.
\]
Combining with the definition of $ \theta^\flat$ in \eqref{defthetaflat} gives
\[\begin{aligned}\ln\,\frac{m(t)-\sqrt{2\PVins}}{m(t)+\sqrt{2\PVins}}\,< \,&
\frac{2\sqrt{2\PVins}}{\Wins}\,\ln\, \Big[{ 2 \over \sqrt{  \inf_{\xi }{  h_0}  }}+ \Wins\,T_1 \Big]-{\frac{2\sqrt{2\PVins}}{\Wins}}\,\ln\, \Big[{ 2 \over \sqrt{  \inf_{\xi }{  h_0}  }}+ \Wins\,t \Big]\\
&+\ln\,   \frac{m(T_1)-\sqrt{2\PVins}}{m(T_1)+\sqrt{2\PVins}}
\,,\quad¡¡\text{for } t>T_1.\end{aligned}\]
By  \eqref{m:at:T_1} again, the right side of the above expression will decrease as $t$ increases and will approach $0$ from above in finite time. This implies  $m(t)$   approaches  $-\infty$ at the same time. By the comparison principle, Lemma \ref{lem:comp},
and by Corollary \ref{cor:Tmax:open} at the end of Section \ref{sec:Rie}, the only type of singularity must satisfy \eqref{def:sing}-\eqref{def:sing2}. Hence the proof of the theorem is completed.
\end{proof}

\subsection{General initial data with small gradients}
By \eqref{sup:Z},   we always have an upper bound for $Z_j$. In order to prove the singularity formation for general initial data, it follows from Theorem \ref{thm:large:sing} that we need only to focus on the following case for the purpose of proving singularity formation,
\be\label{min:max:Z} Z_1,Z_2\in\Big( -\sqrt{2\PVins},\,\max\{\Zins ,\,\sqrt{2 \PVins
 }\}\Big].\ee
Note that the condition \eqref{min:max:Z} implies that
\[
|Z_2-Z_1|< \Gin,
\]
where  the gap $\Gin$ is defined in \eqref{def:Gin}.

 The nice thing about  \eqref{min:max:Z} is that it gives an additional bound.
In particular, considering the comparison principle in Lemma \ref{lem:comp} and especially the $-{1\over h}$ term in the differential inequality in  \eqref{m:lem}, we need a { much sharper} { upper bound} for $h$ than the  previously established one.
In fact, the \emph{a priori} assumption \eqref{min:max:Z} gives a bound on   $\pxi h$ because by definitions \eqref{def:Z},
we have
\be\label{h:G} \left|\pxi[h^{\gamma/2}(t,\xi)]\right|={\gamma\over2}\,{|Z_2-Z_1|\over2}< {\gamma\over4}\,\Gin  \,.\ee
In order to turn such estimate into an upper bound on $h$, we utilize the well-known { conservation of total physical energy}  for the rotating shallow water system.

For $C_0^1$ initial data $(h_0-1,u_0,v_0)$ and strictly positive $h$,  it is straightforward to  show that $E(t)$ defined in \eqref{E:cons} is invariant with respect to time, i.e.\[E(t)\equiv \Ein\,.\]
Immediately, by the definition and conservation of physical energy $\Ein$, for fixed $t$, we have
\be\label{f:2}
\int_{-\infty}^\infty\big(h^{\gamma/2}(t,\cdot)-1\big)^2\,d\xi\le\int_{-\infty}^\infty{\cQ(h(t,\xi))\over\zeta(\als,\gamma)}\,d\xi\le{\Ein\over \zeta(\als,\gamma)}\,,\ee
where
\begin{equation}\label{defals}
\als(t) =\displaystyle\sup_\xi(h^{\gamma/2}(t,\xi)-1)
\end{equation}
and
$\zeta$ is the function defined as follows
\begin{equation}\label{def:zeta} \zeta(\beta, \gamma) :={1 \over\gamma^2}\Big\{   (\beta+1)^{-\frac{2}{\gamma}}+(\beta+1)^{-\frac{2}{\gamma}-1}\Big\}\,.
\end{equation}
The proof of the estimate \eqref{f:2} is given in Proposition \ref{eleprop1} in Appendix \ref{secab}.
 Next, we estimate $\als(t)$  in the following lemma.

 \begin{lem} \label{lem:fss}Fix $T>0$. Consider a classical solution  $(h,u,v)\in C^1([0,T)\times\bbR) $ of the rotating shallow water    system \eqref{p:hinv} with  $C_0^1$ initial data $(h_0-1,u_0,v_0)$ so that
 $\inf_{\xi }h_0>0$. Impose the additional bound \eqref{min:max:Z} on the weighted gradients of Riemann invariants for all time $t\in[0,T)$. Then,
  \be\label{h:fss}  \als(t)< \cF_\gamma^{-1}(\Gin\,\Ein)\,,\ee
where $\als(t)$ is defined in \eqref{defals} and  $\cF_\gamma^{-1}$ is the inverse of function $\cF_\gamma$ defined by \eqref{def:cF}.
%

%

 \end{lem}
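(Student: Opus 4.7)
The plan is to combine two complementary pieces of information about the perturbation $g(t,\xi):=h^{\gamma/2}(t,\xi)-1$ at a fixed time $t\in[0,T)$: a pointwise Lipschitz bound coming from the a priori restriction \eqref{min:max:Z} on the weighted Riemann gradients, and an $L^2$ bound coming from conservation of physical energy. Compared at the point where $g$ attains its supremum $\als$, these produce an implicit inequality in $\als$ which, after unpacking the definition of $\zeta$, rearranges precisely into $\cF_\gamma(\als)\lesssim \Gin\,\Ein$.

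First, I would read off from \eqref{h:G} the strict Lipschitz bound $|\pxi g(t,\xi)|<\frac{\gamma}{4}\Gin$. Second, I would observe that $g(t,\cdot)$ remains compactly supported for every $t\in[0,T)$: the initial perturbation $(h_0-1,u_0,v_0)$ is compactly supported by hypothesis, the characteristic speeds $\pm h^{(\gamma+1)/2}$ are uniformly bounded by Lemma \ref{lem:up:h}, and $(h,u,v)\equiv(1,0,0)$ is a stationary solution, so by finite propagation $h(t,\cdot)\equiv 1$ outside a bounded set. Third, I would invoke the $L^2$ bound \eqref{f:2},
\[
\int_{-\infty}^{\infty} g(t,\xi)^2\,d\xi\;\le\;\frac{\Ein}{\zeta(\als,\gamma)},
\]
which already combines the conservation $E(t)\equiv \Ein$ with the pointwise coercivity $\cQ(h)\ge\zeta(\als,\gamma)\,(h^{\gamma/2}-1)^2$ supplied by Proposition \ref{eleprop1}.

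The crux of the proof is a matching lower bound on $\|g(t,\cdot)\|_{L^2}^2$ via a ``triangle envelope'' argument. We may assume $\als>0$, since otherwise \eqref{h:fss} is trivial. Let $\xi_0$ be a point at which $g(t,\xi_0)=\als$; this exists because $g(t,\cdot)$ is continuous and compactly supported. The Lipschitz bound gives
\[
g(t,\xi)\;\ge\;\als-\frac{\gamma\Gin}{4}|\xi-\xi_0|,
\]
which is nonnegative on the interval of radius $4\als/(\gamma\Gin)$ centered at $\xi_0$. Integrating the square of this linear lower envelope yields a bound of the form $\int g^2\gtrsim \als^3/(\gamma\Gin)$. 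Chaining with the energy bound gives $\als^3\,\zeta(\als,\gamma)\,\lesssim\,\gamma\,\Gin\,\Ein$, and substituting the explicit form of $\zeta$ from \eqref{def:zeta} (after writing $(\alpha+1)^{-2/\gamma}=(\alpha+1)^{-3}(\alpha+1)^{3-2/\gamma}$) rearranges the inequality exactly into $\cF_\gamma(\als)\lesssim \Gin\,\Ein$. Strictness is inherited from the strict bound $|Z_2-Z_1|<\Gin$ coming from the half-open interval in \eqref{min:max:Z}. Finally, the monotonicity of $\cF_\gamma$ observed in the remark following Theorem \ref{mainthm1} allows inversion to conclude \eqref{h:fss}.

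The only conceptually nontrivial step is deciding to pair the pointwise gradient bound (from the Riemann-invariant formalism, and ultimately from the a priori sign constraint on the $Z_j$) with the $L^2$ coercivity (from the physical-energy functional): neither bound alone controls $\sup g$, and it is precisely the fact that a Lipschitz compactly supported function with small gradient cannot simultaneously have both small $L^2$ norm and large supremum that locks $\als$ to $\Ein$. The baroque-looking shape of $\cF_\gamma$ in \eqref{def:cF}, with its $\alpha^3/(\alpha+1)^3$ factor multiplying a sum of $(\alpha+1)^{3-2/\gamma}$ and $(\alpha+1)^{2-2/\gamma}$, is then just the algebraic restatement of ``triangle-$L^2$ lower bound beats energy-coercivity coefficient $\zeta(\als,\gamma)$'' and requires only careful bookkeeping.
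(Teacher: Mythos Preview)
Your approach is essentially identical to the paper's: both combine the $L^2$ bound \eqref{f:2} from energy conservation with the pointwise gradient bound \eqref{h:G} via an interpolation of the form $\|g\|_{L^\infty}^3 \le C\,\|g'\|_{L^\infty}\|g\|_{L^2}^2$, which the paper records separately as Proposition~\ref{easy:prop} and you reprove inline with the triangle-envelope argument. The only thing to tighten is the informal ``$\lesssim$'': carry the constants through the triangle computation (namely $\int g^2 \ge \tfrac{2}{3L}\,\als^3$ with $L=\tfrac{\gamma}{4}\Gin$) so that the algebra lands on the precise definition of $\cF_\gamma$ in \eqref{def:cF}.
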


 \begin{proof}Throughout the proof, we always have uniform boundedness of $h,u,v,R_1,R_2$ and uniform strict positive lower bound of $h$ guaranteed by Lemmas \ref{lem:up:h} and \ref{lem:up:low}.

For fixed $t\in[0,T)$, it follows from Proposition \ref{easy:prop} in Appendix \ref{secab} that we have
  \begin{equation*}
  \begin{aligned}
  (\als(t))^3=&\sup_{\xi}(h^{\gamma/2}(t,\xi)-1)^3 \\
  \leq & \frac{3}{4} \,\|h^{\gamma/2}-1\|_{L^2}^2\,\big \|\pxi  \big(h^{\gamma/2}\big) \big \|_{L^\infty}\\
  < &  {3 \over 4}\, { \Ein\over \zeta(\als(t), \gamma)}\, {\gamma\over4}\,  \Gin,
  \end{aligned}
  \end{equation*}
where $\als(t)$ is the function defined in \eqref{defals} and the estimates \eqref{f:2} and \eqref{h:G} are used. Hence
  \[
  \cF_\gamma(\als)=\displaystyle{16\over3\gamma }\,{(\als)^3\, \zeta(\als,\gamma)}< \Gin\Ein.
  \]
   Therefore,  by the monotonicity   of $\cF_\gamma$, we   prove  \eqref{h:fss}.
  \end{proof}

 We are ready to state and prove the main theorem of finite time singularity formation for the solutions with arbitrarily  small initial gradients.

 \begin{thm}\label{thmenergy}
 Under the same assumptions and notations as Lemma \ref{lem:fss}, if
 \be\label{Z:main}-\sqrt{2\PVins}<\inf_\xi\big\{Z_1,Z_2\}\Big|_{t=0}< -\sqrt2\,\sqrt{\PVins-\Big[\cF_\gamma^{-1}\big(\Gin\,\Ein\big)+1\Big]^{-{2\over\gamma}}},\ee
  then $\inf_\xi\big\{Z_1,Z_2\}$ will reach $-\sqrt{2\PVins}$ at a finite time that is bounded by a continuous function of $\Gin$, $\Ein$, $\PVins$, and $\big\{Z_1,Z_2\}\Big|_{t=0}$.
 \end{thm}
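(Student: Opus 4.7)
My plan is to apply the strict comparison principle of Lemma \ref{lem:comp} against an autonomous Riccati barrier whose coefficients are frozen at the uniform upper bound on $h$ furnished by Lemma \ref{lem:fss}. To that end, set
\[
h^{\ast}:=\big(\cF_{\gamma}^{-1}(\Gin\Ein)+1\big)^{2/\gamma},\qquad a:=\sqrt{2(\PVins-1/h^{\ast})},\qquad m_0:=\inf_{\xi}\min\{Z_1,Z_2\}\big|_{t=0}.
\]
The hypothesis \eqref{Z:main} rewrites exactly as the sandwich $-\sqrt{2\PVins}<m_0<-a<0$, while Lemma \ref{lem:fss} furnishes the strict uniform bound $\sup_{\xi}h(t,\xi)<h^{\ast}$ at every $t$ for which the admissibility condition \eqref{min:max:Z} holds.

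I would then introduce the Riccati barrier
\[
m'(t)=\sqrt{h^{\ast}}\Big[-\tfrac12 m^2(t)+\PVins-\tfrac{1}{h^{\ast}}\Big]=\tfrac12\sqrt{h^{\ast}}\,\big(a^2-m^2(t)\big),\qquad m(0)=m_0,
\]
and verify both strict inequalities in the hypothesis of Lemma \ref{lem:comp} while $m(t)\in(-\sqrt{2\PVins},-a)$. The lower one, $m'(t)<0$, is immediate from $|m|>a$. For the upper one, note that $A:=-\tfrac12 m^2+\PVins\in[0,1/h^{\ast})$ throughout this window, so the scalar map $h\mapsto A\sqrt{h}-1/\sqrt{h}$ is strictly increasing in $h$, and the strict bound $\sup_{\xi}h<h^{\ast}$ from Lemma \ref{lem:fss} lifts pointwise to
\[
\sup_{\xi\in\mR}\sqrt{h(t,\xi)}\Big[-\tfrac12 m^2+\PVins-\tfrac{1}{h(t,\xi)}\Big]<\sqrt{h^{\ast}}\Big[-\tfrac12 m^2+\PVins-\tfrac{1}{h^{\ast}}\Big]=m'(t).
\]
Hence Lemma \ref{lem:comp} yields the pointwise sub-solution estimate $\inf_{\xi}\min\{Z_1,Z_2\}(t,\xi)<m(t)$.

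The Riccati ODE is separable, and partial fractions integrate it explicitly to
\[
\frac{|m(t)|-a}{|m(t)|+a}=\frac{|m_0|-a}{|m_0|+a}\,e^{a\sqrt{h^{\ast}}\,t},
\]
so $|m(t)|$ increases strictly from $|m_0|\in(a,\sqrt{2\PVins})$ and attains $\sqrt{2\PVins}$ at the explicit finite time
\[
t_1:=\frac{1}{a\sqrt{h^{\ast}}}\,\ln\frac{(\sqrt{2\PVins}-a)(|m_0|+a)}{(\sqrt{2\PVins}+a)(|m_0|-a)},
\]
which is manifestly a continuous function of $\PVins$, $\Gin$, $\Ein$ and $m_0$. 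To close the argument, let $T^{\ast}$ denote the first time at which $\inf_{\xi}\min\{Z_1,Z_2\}$ equals $-\sqrt{2\PVins}$; on $[0,T^{\ast})$ the sandwich \eqref{min:max:Z} holds (the upper side by Lemma \ref{lem:up:low}, the lower by the definition of $T^{\ast}$), so the barrier construction is legitimate throughout this interval. If $T^{\ast}>t_1$, sending $t\uparrow t_1$ would give the contradiction $-\sqrt{2\PVins}\le\inf_{\xi}\min\{Z_1,Z_2\}(t_1)<m(t_1)=-\sqrt{2\PVins}$, so in fact $T^{\ast}\le t_1$.

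The step I expect to require the most care is the simultaneous bookkeeping in the barrier verification: one has to keep $A\ge 0$ (so that the monotonicity of $h\mapsto A\sqrt{h}-1/\sqrt{h}$ lets the strict bound $\sup_\xi h<h^{\ast}$ from Lemma \ref{lem:fss} propagate into the strict upper bound on $\sup_\xi\sqrt{h}\,[A-1/h]$) while simultaneously ensuring $A<1/h^{\ast}$ (so that $m'<0$). Both conditions persist exactly when $m$ sits in the admissible window $(-\sqrt{2\PVins},-a)$, and it is the definition of $T^{\ast}$ that pins $m$ inside this window throughout $[0,T^{\ast})$.
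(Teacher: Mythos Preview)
Your argument is correct and follows essentially the same route as the paper: freeze the coefficients of the Riccati barrier at the uniform bound $h^\ast$ supplied by Lemma \ref{lem:fss}, verify the strict hypotheses of Lemma \ref{lem:comp} via the monotonicity of $h\mapsto A\sqrt{h}-1/\sqrt{h}$ for $A\ge0$, and then read off that $m(t)$ reaches $-\sqrt{2\PVins}$ in finite time. The paper organizes this as a pure contradiction (``suppose \eqref{min:max:Z} holds for all time'') and merely asserts the Riccati conclusion without integrating, whereas you define $T^\ast$ and compute $t_1$ explicitly; the only cosmetic slip is that the limit $t\uparrow t_1$ yields $\inf_\xi\min\{Z_1,Z_2\}(t_1)\le m(t_1)$ rather than a strict inequality, but the contradiction with $t_1<T^\ast$ still goes through.
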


\begin{pf} We prove the theorem by the contradiction argument. Suppose that the theorem is not true so that the additional gap condition \eqref{min:max:Z} is always true. Then, we can apply the estimate \eqref{h:fss} obtained in Lemma \ref{lem:fss} to have
 \[
  \sup_{\xi} h(t,x)<\hs:=\Big[\cF_\gamma^{-1}\big(\Gin\,\Ein\big)+1\Big]^{{2\over\gamma}}\,.\]
 This leads to, as long as $m(t)\in(-\sqrt{2\PVins},0)$,
$$ \sup_{\xi\in\bbR} {\sqrt{h(t,\xi) } }\Big(-{1\over2}m^2(t)+\PVins-{1\over  {h(t,\xi) }}\Big)< {\sqrt{\hs } }\Big(-{1\over2}m^2(t)+\PVins \Big)-{1\over \sqrt{\hs}} \,.$$  Then, we choose $m(t)$  to be the solution of the following initial value problem for the ordinary differential equation
 \[\left\{\begin{aligned}&{d\over dt}m(t)={\sqrt{\hs } }\Big(-{1\over2}m^2(t)+\PVins \Big)-{1\over \sqrt{\hs}}\,,\\
 &m(0)=\inf_\xi\big\{Z_1,Z_2\}\Big|_{t=0}.\end{aligned}\right.\]
 Meanwhile, by assumption \eqref{Z:main}, we have
 \[
 m(0)< -\sqrt2\,\sqrt{\PVins-{1\over \hs }}.
 \]
Then,  a straightforward calculation shows that $m(t)$ is decreasing and negative,  and reaches $-\sqrt{2\PVins}$ at a finite time. Furthermore, it is easy to see that $m(t)$ satisfies the assumptions of comparison principle, Lemma \ref{lem:comp}.
Therefore, by Lemma \ref{lem:comp}, $\inf_\xi \{Z_1, Z_2\}$ also  reaches  $-\sqrt{2\PVins}$ at a finite time. The proof of the theorem is completed.
\end{pf}

It is easy to see that Theorem \ref{mainthm1} is a direct consequence of Theorems \ref{thm:large:sing} and \ref{thmenergy}.

\section{Klein-Gordon Equation and Global Existence}\label{sec:KL}
In this section, we prove Theorem \ref{mainthm2}. For simplicity, we only consider $\gamma=2$ which is from the geophysical rotating shallow water system.

Differentiate  the third  equation  in (\ref{p:hinv}) with respect to $t$, and combine it with  the second equation in  (\ref{p:hinv}) and \eqref{PV:Lag} to obtain
\[\partial_{tt} v -\pxi\bigg(\frac{1}{2(\PVin(\xi)-\pxi v)^2}\bigg) +v =0,\]i.e.
\be\label{KL18}
\partial_{tt} v -\frac{\pxixi  v}{(\PVin(\xi)-\pxi v)^3} +v =\frac{-\PVin'(\xi)}{(\PVin(\xi)-\pxi v)^3}.
\ee
This is a typical quasilinear Klein-Gordon equation. The well-posedness for the Cauchy problem \eqref{p:hinv} and \eqref{RSW:IC} is equivalent to study the well-posedness for the  Klein-Gordon equation \eqref{KL18}.

For a general funtion $\PVin(\xi)$, the linear part of the equation \eqref{KL18}  is a Klein-Gordon operator with variable linear coefficients. There are very limited results for this type of equations because of lack of understanding for the associated linear operator. If $\PVin(\xi)$ is a constant and,
without loss of generality, we assume that $\PVin(\xi)\equiv 1$, then the equation \eqref{KL18} can be written as
\be \label{v:KG:1}
\partial_{tt} v -\frac{1}{(1-\pxi v)^3} \pxixi  v+v =0.
\ee
Note that by Taylor series
\be
\frac{1}{(1-\pxi v)^3}=1+3\,\pxi v+6( \pxi v)^{2}+\cdots
\ee
Then the system is equivalent to
\be \label{KG:v}
\partial_{tt} v - \pxixi  v+v =(3\pxi v +6(\pxi v)^2 )\pxixi v +R_4=\pxi\,\Big({3\over2}(\pxi v)^2+2(\pxi v)^3\Big)+R_4.
\ee
where $R_4$ contains quartic terms and higher order terms.

The equation \eqref{KG:v} is a typical quasilinear Klein-Gordon equation with {constant linear coefficients} and quadratic nonlinearity. It has attracted much attention in analysis since 1980's. When the spatial dimension is larger than or equal to 4, the global existence of classical solutions to  quasilinear Klein-Gordon equation with quadratic nonlinearity  was proved in  \cite{KlainermanP}. The breakthrough for   study on three dimensional Klein-Gordon equation with quadratic nonlinearity was made by Klainerman \cite{Klainerman} and Shatah \cite{Shatah} independently by using the vector field approach and normal form method, respectively. Two dimensional semilinear Klein-Gordon equation with quadratic nonlinearity was established in \cite{OzawaSL, OzawaQL} by combining the vector field approach and normal form method together. Note that the equation \eqref{KG:v} is a one dimensional quasilinear Klein-Gordon equation with quadratic nonlinearity. Since the dispersive decay rate for one dimensional Klein-Gordon equation is only $t^{-1/2}$, it is not easy to study global existence of
small solutions of Klein-Gordon equation in one dimensional setting with general nonlinearity. In \cite{Delort:1D}, Delort introduced null conditions on the structure of quadratic and cubic nonlinearities and then obtained the global existence result subject to such null conditions by performing delicate analysis with the tools of  normal form  and  vector field, and with {the hyperbolic coordinate transformation}.

We denote the quadratic and cubic nonlinearities of the Klein-Gordon equation \eqref{KG:v} as
\begin{equation}
Q(\pxi^2 v, \pxi v)=3\pxi v \pxi^2 v\quad \text{and}\quad P(\pxi^2 v, \pxi v) =6 (\pxi v)^2 \pxi^2 v.
\end{equation}
It is easy to see that $Q$ is linear with respect to $\pxi^2 v$ for fixed $\pxi v$ and $P$ is homogeneous of degree $2$ in $\pxi v$ and homogeneous of degree $1$ in $\pxi^2 v$. Let us define (here and below, primes do {\it not} indicate derivatives)
\begin{equation}
Q_1''(\pxi^2 v, \pxi v) =-i Q(-\pxi^2 v, i\pxi v)\quad \text{and}\quad P_2''(\pxi^2 v, \pxi v) =-P(-\pxi^2 v, i \pxi v),
\end{equation}
where $i=\sqrt{-1}$. If we introduce the following functions of two variables as
\begin{equation}
q_1''(\omega_0, \omega_1) =Q_1''(\omega_1^2, \omega_1) \quad \text{and}\quad p_2''(\omega_0, \omega_1) =P_2''(\omega_1^2, \omega_1),
\end{equation}
then the straightforward computations yiled
\[
q_1''(\omega_0, \omega_1) =-3\omega_1^3\quad \text{and}\quad p_2''(\omega_0, \omega_1) =-6\omega_1^4.
\]
This implies that the quantity $\Phi(y)$ defined in \cite[(1.7)-(1.9) in page 7]{Delort:1D} must be identically zero (with all other relevant $q_k'',p_k''$ being identically zero), i.e., the nonlinearity of the equation \eqref{KG:v} satisfies the null condition defined in  \cite[Definition 1.1 in page 7]{Delort:1D}. Hence it follows from \cite[Theorem 1.2]{Delort:1D} and \cite[Theorem 1.2]{Delort2} that  we prove Theorem \ref{mainthm2} for the global existence of small solutions for the rotating shallow water system. Note that the initial data in theorems  of \cite{Delort:1D,Delort2}  are in terms of $(v,\pt v)$, which is a natural choice for the Cauchy problem of the Klein-Gordon equation \eqref{KG:v}. Whereas the smallness assumptions in our Theorem \ref{mainthm2} are in terms of $(u_0,v_0)$,  they are related to $(v,\pt v)$ by the third equation in (\ref{p:hinv}).

\appendix
\section{The Lagrangian formulation for rotating shallow water system}\label{secaa}
In this appendix, we give the proof for the equivalence between the Eulerian formulation of rotating shallow water system and its Lagrangian formulation.

Suppose that $\sigma$ is defined in \eqref{ODE:xit}.
Then, for any $C^1$ scalar-valued functions $ \somef(t,x)$, we can  define $  \wt \somef(t,\xi):= \somef(t,\sigma (t,\xi))$ that satisfies the following identity,
\be \label{pt:tilde}
\pt  \wt\somef(t,\xi)=\pt  \somef(t, x)\Big|_{ x=\sigma (t, \xi)}+   u(t, x) \,\px   \somef(t,  x)\Big|_{ x=\sigma (t, \xi)}.
\ee

Suppose that $(\wt h, \wt u, \wt v)$ are defined in \eqref{def:huv}.
The mass conservation  of the (Eulerian) rotating shallow water  system as in the first equation in   \eqref{RE:Eu} becomes
\be \label{eqh2}
\pt   \wt h(t,\xi)+  \wt h(t, \xi) \px  u(t, x)\Big|_{ x=\sigma (t, \xi)}=0.
\ee
Differentiating the equation in  \eqref{ODE:xit} with respect to $\xi$ yields
\begin{equation}\label{eqpttxi}
\partial_t \pxi \sigma(t,\xi)=\pxi\,\sigma(t,\xi)\partial_x u(t, x)\Big|_{ x=\sigma (t, \xi)}.
\end{equation}
Combining the last two equations gives
\[ \pt\big[ \,\wt h(t,\xi) \,\pxi \sigma(t,\xi)\,\big] =0. \]
On the other hand, the initial condition for $\sigma$ in \eqref{ODE:xit} amounts to $\xi=\phi(\sigma(0,\xi))$. Combining it with \eqref{relabel} which we differentiate
with respect to $\xi$ yields
\be\label{relabel:cons}\qquad h(0,\sigma(0,\xi))\,\pxi \sigma (0,\xi)=1\qquad\text{i.e.}\qquad \wt h(0,\xi)\,\pxi \sigma (0,\xi)=1\,.\ee
Therefore, it follows from the last two equations that one has
\begin{equation}\label{h:pxxi}
\wt h(t,\xi)\,\pxi\sigma (t, \xi) \equiv1\qquad\text{for all }\;\;t\in[0,T).
\end{equation}
This, together with the chain rule, gives
 \[\pxi  \wt \somef(t,  \xi)={1\over \wt h(t,\xi)}\,\px   \somef(t,  x)\Big|_{ x=\sigma (t, \xi)}.\]

\begin{prop}
For $C^1$ solutions with non-vacuum initial data, the Lagrangian rotating shallow water system  \eqref{RSW:Lag} is equivalent to the original Eulerian rotating shallow water system \eqref{RE:Eu}. That is to say,
\begin{itemize}
\item[(a)] given a $C^1([0,T]\times \mR)$ solution $( h, u, v)$ of \eqref{RE:Eu} with $0<c_h\le  {h}(0,x )\le C_h$, then $(\wt h,\wt u,\wt v)$ defined by \eqref{relabel}, \eqref{ODE:xit} and \eqref{def:huv} is a solution of \eqref{RSW:Lag};
\item[(b)] given a $C^1([0,T_{\max}]\times \mR)$ solution $(\wt h,\wt u,\wt v)$  of \eqref{RSW:Lag} with $0<c_h\le  \wt{h} (0,\xi)\le C_h$, let $(h, u, v)(t,x):=(\wt h,\wt u,\wt v)(t,  \Upsilon (t, x))$ where $ \Upsilon(t, x)$ satisfies
\be\label{ODE:xi:uh}
\left\{
\begin{aligned}
&\pt   \Upsilon (t, x) =  -\wt u(t, \Upsilon (t, x))\,\wt h(t, \Upsilon (t, x)), \\
& \Upsilon(0, x)=\chi^{-1}(x)
\end{aligned}
\right.
\ee
with $\chi^{-1}$  the inverse   of   bijection $\chi$ defined as
\be\label{def:tilde:sigma}
\chi(\xi)=\int_0^\xi{1\over \wt h(0,z)}\,dz.
\ee
Then $(h, u,v)$ solves   system \eqref{RE:Eu}.
\end{itemize}
\end{prop}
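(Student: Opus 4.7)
The strategy is to use the chain-rule identities already developed in the appendix, in particular \eqref{pt:tilde}, \eqref{eqpttxi}, and the invariant Jacobian relation $\wt h\,\pxi\sigma\equiv 1$ from \eqref{h:pxxi}, to convert each equation in one formulation into the corresponding equation of the other. The whole proof is essentially a bookkeeping exercise in the change of variables $x=\sigma(t,\xi)$, with the single subtle point being the existence and dynamics of the inverse map $\Upsilon$ in part (b).

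For part (a), I would first rewrite each momentum equation of \eqref{RE:Eu} in advection (non-conservation) form by expanding the products and cancelling using mass conservation, obtaining
\[\pt u + u\px u + \tfrac{1}{h}\px(h^\gamma/\gamma) = v,\qquad \pt v + u\px v = -u.\]
The first equation of \eqref{RSW:Lag} is then immediate from \eqref{eqh2} combined with the identity $\px u|_{x=\sigma}=\wt h\,\pxi\wt u$ (a consequence of the chain rule and \eqref{h:pxxi}). Applying \eqref{pt:tilde} to the advection-form momentum and balance equations, and using $\px=\wt h\,\pxi$ on spatial derivatives evaluated along the flow, yields
\[\pt\wt u + \pxi(\wt h^\gamma/\gamma) = \wt v,\qquad \pt\wt v + \wt u = 0,\]
which are exactly the last two equations of \eqref{RSW:Lag}. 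The initial data transformation is trivially consistent with the definitions in \eqref{def:huv}.

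For part (b), I would proceed in reverse. First, by standard ODE theory (using the $C^1$ regularity and positive lower bound $c_h$ on $\wt h$), the flow $\Upsilon(t,\cdot)$ in \eqref{ODE:xi:uh} is a $C^1$ diffeomorphism of $\mR$ for each $t\in[0,T_{\max}]$, with positive Jacobian $\px\Upsilon$. Define $\sigma(t,\cdot):=\Upsilon(t,\cdot)^{-1}$. Differentiating the identity $\Upsilon(t,\sigma(t,\xi))=\xi$ in $t$ and using \eqref{ODE:xi:uh} gives $\pt\sigma = \wt u\,\wt h \cdot (\px\Upsilon)^{-1}|_{x=\sigma}$; on the other hand, from $\sigma(t,\Upsilon)=x$ one gets $\pxi\sigma\cdot\px\Upsilon = 1$. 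Combining the two and checking the initial condition $\sigma(0,\xi) = \chi(\xi)$ (which matches $\phi^{-1}$ once one identifies $\phi=\chi^{-1}$ via the substitution $s=\chi(\tau)$ in $\phi(x)=\int_0^x h_0$), reduces us to the Lagrangian setup of Section~\ref{secaa}. Then I would verify the dynamic Jacobian identity $\wt h\,\pxi\sigma\equiv 1$ by using the first equation of \eqref{RSW:Lag} together with \eqref{eqpttxi} to check that $\pt(\wt h\,\pxi\sigma)=0$, and conclude via \eqref{relabel:cons} that this quantity stays identically $1$.

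With $\wt h\,\pxi\sigma\equiv 1$ in hand, the chain rule gives $\px = \wt h\,\pxi$ and \eqref{pt:tilde} gives $\pt+u\px = \pt|_\xi$ when acting on any transported quantity. Applying these to the three Lagrangian equations \eqref{RSW:Lag} and re-inserting everything into the original Eulerian coordinates produces exactly the three equations of \eqref{RE:Eu}; the momentum and balance equations come out in advection form and can be put back into conservation form by adding the appropriate multiple of mass conservation. The only real obstacle is the existence and regularity of $\sigma$ as the inverse of $\Upsilon$ and the derivation of its evolution equation, which is why I would spend most of the care there; everything else reduces to the same chain-rule computations used in part~(a) applied in the opposite direction.
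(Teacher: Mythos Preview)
Your treatment of part~(a) is fine and indeed the paper simply refers back to the appendix computations preceding the proposition for this direction.

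For part~(b) there is a subtle circularity. You define $\sigma:=\Upsilon(t,\cdot)^{-1}$ and then propose to invoke \eqref{eqpttxi} to show $\pt(\wt h\,\pxi\sigma)=0$. But \eqref{eqpttxi} was obtained in the appendix by differentiating the relation $\pt\sigma=u(t,\sigma)$, which in the present direction is \emph{not yet known}: what you actually derive from $\Upsilon(t,\sigma(t,\xi))=\xi$ together with \eqref{ODE:xi:uh} and $\pxi\sigma\cdot\px\Upsilon=1$ is only $\pt\sigma=\wt u\,\wt h\,\pxi\sigma$, and this coincides with $u(t,\sigma)$ precisely when $\wt h\,\pxi\sigma=1$, the very identity you are trying to establish. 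So ``reduces us to the Lagrangian setup of Section~\ref{secaa}'' is premature, and the appeal to \eqref{eqpttxi} is not justified at that stage.

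The paper sidesteps this by never introducing $\sigma$ in part~(b): it works directly with $\Upsilon$, combines \eqref{ODE:xi:uh} with the first equation of \eqref{RSW:Lag} to obtain $\pt\big(\px\Upsilon/h\big)=0$, and then reads off $\px\Upsilon=h$ from the initial condition. Your route can be repaired in the same spirit: differentiate $\pt\sigma=\wt u\,\wt h\,\pxi\sigma$ in $\xi$ and use $\pt\wt h=-\wt h^2\pxi\wt u$ to obtain the transport equation $\pt(\wt h\,\pxi\sigma)=\wt u\,\wt h\,\pxi(\wt h\,\pxi\sigma)$, which together with the initial value $1$ forces $\wt h\,\pxi\sigma\equiv 1$. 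After that, everything you wrote goes through. In short, the strategy is right; only the invocation of \eqref{eqpttxi} must be replaced by a direct computation (or, as in the paper, one works with $\Upsilon$ rather than its inverse).
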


\begin{proof} We  need only to prove part (b).

 By  \eqref{ODE:xi:uh},  for any $C^1$ scalar-valued functions $\wt\somef(t,\xi)$ and $\somef(t, x):=\wt \somef(t,\Upsilon (t, x))$, we have
\be\label{remark:pt}\pt  \somef(t, x)=\pt   \wt \somef(t,   \xi)\Big|_{ \xi=\Upsilon (t, x)}-\wt u(t,\xi)  \wt h(t,\xi)\,\pxi  \wt \somef(t,\xi)\Big|_{ \xi=\Upsilon (t, x)}.\ee
Hence,  the first equation in (\ref{RSW:Lag}) can be written as
\begin{equation}\label{eq121}
\pt h(t, x)=-\Big(\wt h\,\pxi(\wt u \, \wt h)\Big)\Big|_{ \xi=\Upsilon (t, x)}\,.
\end{equation}
Furthermore, it follows from \eqref{ODE:xi:uh} that one has
\begin{equation}\label{eq122}
\pt[\px\Upsilon(t, x)]=-\px\Upsilon(t, x)\pxi(\wt u\wt h)\Big|_{ \xi=\Upsilon (t, x)}.
\end{equation}
Combining \eqref{eq121} and \eqref{eq122} yields
\be\label{pt:ratio}\pt\left({\px\Upsilon(t, x)\over  h(t, x)}\right)=0,\ee
where we need $h$ to stay away from zero. On the other hand, the initial data of $\Upsilon$ in  \eqref{ODE:xi:uh} amounts to $x=\chi(\Upsilon(0,x))$.
Combine it with \eqref{def:tilde:sigma} which we differentiate with respect to $x$ to obtain
\[{\px \Upsilon(0, x)\over \wt h(0,\Upsilon(0, x))}=1,\qquad\text{i.e.,}\qquad {\px \Upsilon(0, x)\over   h(0,x)}=1.\]
This together  with \eqref{pt:ratio} implies
\[{\px\Upsilon(t, x)\over  h(t, x)}\equiv 1.\]
Thus, by the chain rule,   any $C^1$ scalar-valued function $\wt\somef(t,\xi)$ and $\somef(t, x):=\wt \somef(t,\Upsilon (t, x))$ satisfy
\be\label{pxt}  \px   \somef(t,  x)=h(t,  x)\, {\pxi  \wt \somef(t,\xi)\Big|_{ \xi=\Upsilon  (t,  x)} } \,.\ee
Substituting this into \eqref{remark:pt}  gives
\be\label{ptpt}\pt  \somef(t, x)=\pt   \wt\somef(t,   \xi)\Big|_{ \xi=\Upsilon (t, x)}- u \,\px  \somef(t,  x).\ee

Finally,  thanks to  \eqref{pxt} and \eqref{ptpt}, we have the  Lagrangian-to-Eulerian substitution rules: ``replace $\pxi$   with ${1\over   h}\,\px$ and then replace   $\pt$   with $(\pt+ u\px)$''.
 Apply them to  transform  the system  \eqref{RSW:Lag} to its Eulerian formulation \eqref{RE:Eu}.
\end{proof}

\section{Two elementary propositions}\label{secab}
In this appendix, we present two elementary propositions which are used in Section \ref{sec:shock}.
\begin{prop}\label{eleprop1}
 Given any two positive constants $\alpha$ and $\beta$ satisfying $-1<\alpha \le \beta$, then
\begin{equation}\label{Q:f2} \alpha^2\le  {\cQ((\alpha+1)^{2\over\gamma})\over{\zeta(\beta, \gamma)}} \,,
\end{equation}
where $\cQ$ and $\zeta$ are defined in \eqref{defcQ} and \eqref{def:zeta}, respectively.
\end{prop}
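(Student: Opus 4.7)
The plan is to perform a single substitution ($s=(\tau+1)^{2/\gamma}$) that rewrites both sides of the claimed inequality in terms of one common auxiliary function, after which the problem collapses to a one-variable monotonicity argument. Explicitly, this substitution converts
\[
\cQ\bigl((\alpha+1)^{2/\gamma}\bigr) = \frac{2}{\gamma^2}\int_0^\alpha \tau(\tau+2)(\tau+1)^{-2/\gamma-1}\,d\tau,
\]
and an algebraic rearrangement of the definition of $\zeta$ gives $\alpha^2\zeta(\beta,\gamma) = \frac{\alpha^2}{\gamma^2}\,M(\beta)$, where I introduce the shorthand $M(t):=(t+1)^{-2/\gamma-1}(t+2)$. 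So the inequality to prove is equivalent to
\[
\int_0^\alpha k(\tau)\,d\tau \;\ge\; \frac{\alpha^2}{2}\,M(\beta), \qquad k(\tau) := \tau(\tau+2)(\tau+1)^{-2/\gamma-1},
\]
and the key observation is the factorization $k(\tau) = \tau\,M(\tau)$, which ties the integrand directly to $M$.

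Next I would define $G(\alpha) := \int_0^\alpha k(\tau)\,d\tau - \tfrac{\alpha^2}{2}M(\beta)$, note that $G(0)=0$, and differentiate using the factorization to obtain
\[
G'(\alpha) \;=\; k(\alpha) - \alpha\,M(\beta) \;=\; \alpha\bigl(M(\alpha)-M(\beta)\bigr).
\]
Thus the sign of $G'$ depends only on the sign of $\alpha$ and on the ordering of $M(\alpha)$ and $M(\beta)$, both of which are easy to control.

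The monotonicity step is essentially a one-line computation: one finds $M'(t) = (t+1)^{-2/\gamma-2}\bigl[-\tfrac{2}{\gamma}t - \tfrac{4}{\gamma} - 1\bigr]$, and the bracket is strictly negative throughout $t\in(-1,\infty)$ (its value at $t=-1$ is $-2/\gamma-1<0$ and its slope in $t$ is also $-2/\gamma<0$), so $M$ is strictly decreasing. Because $\alpha\le\beta$, we therefore have $M(\alpha)\ge M(\beta)$, which gives $G'(\alpha)\ge 0$ on $[0,\beta]$ and $G'(\alpha)\le 0$ on $(-1,0]$. Combined with $G(0)=0$, integrating $G'$ forward from $0$ handles $\alpha\in[0,\beta]$, while on $(-1,0]$ the function $G$ is nonincreasing and passes through $0$ at $\alpha=0$, which forces $G(\alpha)\ge G(0)=0$ there as well. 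Multiplying back by the factor $2/\gamma^2$ gives exactly \eqref{Q:f2}.

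The main obstacle, and the point I would flag carefully, is the sign regime of $\beta$. The argument above needs $\beta\ge 0$ so that the zero of $G'$ lies at the endpoint $\alpha=0$ rather than inside $[\alpha,\beta]$; if $\beta<0$ were allowed, the sign analysis would actually force $G$ below zero, and indeed for $\gamma=2$ a direct check at $\alpha=\beta=-\tfrac12$ shows the inequality fails. The phrase ``positive constants'' in the hypothesis should accordingly be read as $\beta\ge 0$ (with $\alpha\in(-1,\beta]$ still possibly negative), which is precisely the setting in which the proposition is invoked in Lemma~\ref{lem:fss}: there $\beta$ plays the role of $\als(t)=\sup_\xi(h^{\gamma/2}-1)$, and this is nonnegative whenever $(h-1)$ has compact support. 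Aside from this caveat, every step is a short calculus manipulation.
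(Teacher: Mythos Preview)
Your proof is correct and follows essentially the same route as the paper's: the paper defines $q(\alpha):=\cQ((\alpha+1)^{2/\gamma})-\alpha^2\zeta(\beta,\gamma)$, differentiates in $\alpha$, and obtains exactly your expression $q'(\alpha)=\frac{2\alpha}{\gamma^2}\big(M(\alpha)-M(\beta)\big)$ (with $M(t)=(t+1)^{-2/\gamma}+(t+1)^{-2/\gamma-1}=(t+2)(t+1)^{-2/\gamma-1}$), so that $q'(\alpha)\alpha\ge0$ and hence $q(\alpha)\ge q(0)=0$. Your preliminary substitution $s=(\tau+1)^{2/\gamma}$ just repackages $\cQ$ as an integral from $0$ to $\alpha$ before differentiating, but the resulting $G$ is $\tfrac{\gamma^2}{2}q$ and the sign analysis is identical. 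Your observation that the argument needs $\beta\ge0$ (so that $0\in(-1,\beta]$ and the minimum of $q$ lies in the admissible range) is well taken; the paper's phrasing ``positive constants'' together with ``$-1<\alpha\le\beta$'' is indeed slightly imprecise, and the intended regime --- matching the application in Lemma~\ref{lem:fss} where $\beta=\als(t)\ge0$ --- is exactly the one you identified.
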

\begin{proof}
Define
\[
q(\alpha):= \cQ((\alpha+1)^{2\over\gamma})- { \alpha^2 \, \zeta(\beta,\gamma)}\,.
\]
By the definition of $\cQ$ and straightforward differentiation, we have
\[\begin{aligned}
q'(  \alpha)&= {2\over\gamma}(\alpha+1)^{\frac{2}{\gamma}-1}\cdot {1\over\gamma}\Big\{  \Big[{(\alpha+1)^{\frac{2}{\gamma}}}\Big]^{\gamma-2}-\Big[{(\alpha+1)^{\frac{2}{\gamma}}}\Big]^{ -2}\Big\}- {2  \alpha}\,{ \zeta(\beta, \gamma)} \\
&={2\alpha \over\gamma^2}\Big\{ (\alpha+1)^{-\frac{2}{\gamma}}+(\alpha+1)^{-\frac{2}{\gamma}-1}- (\beta+1)^{-\frac{2}{\gamma}}-(\beta+1)^{-\frac{2}{\gamma}-1}\Big\}.
 \end{aligned}\]
Since   $\alpha\in (-1, \beta]$, it is apparent from the above that
${ q'(\alpha)\alpha}\ge0$. Therefore, $q(\alpha)\ge q(0)=0$.
In other words, the estimate \eqref{Q:f2} is proved. 
\end{proof}

\begin{prop}\label{easy:prop}
Given a compactly supported function $g(\xi)\in C^1(\bbR)$, one has 
\begin{equation}\label{sharpg}
 \|g^3\|_{L^\infty}\leq \frac{3}{4}\,\|g\|_{L^2}^2\, \| g' \|_{L^\infty}\,.
\end{equation}
\end{prop}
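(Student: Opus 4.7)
The plan is to apply the fundamental theorem of calculus to $g^3$ at a point where $|g|$ attains its maximum. First I would pick $x_0 \in \mathbb{R}$ with $|g(x_0)| = \|g\|_{L^\infty}$ and, without loss of generality, assume $g(x_0) = M \geq 0$. Since $g \in C^1_0(\mathbb{R})$, the function vanishes at $\pm\infty$, so integrating $(g^3)' = 3 g^2 g'$ along each of the two half-lines meeting at $x_0$ yields the dual representations
\[ M^3 = 3\int_{-\infty}^{x_0} g^2 g'\,dy = -3\int_{x_0}^{\infty} g^2 g'\,dy. \]

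Next I would take absolute values, pull out $\|g'\|_{L^\infty}$ pointwise, and combine the two resulting bounds $M^3 \leq 3\|g'\|_{L^\infty}\int_{-\infty}^{x_0} g^2\,dy$ and $M^3 \leq 3\|g'\|_{L^\infty}\int_{x_0}^{\infty}g^2\,dy$. Summing them and dividing by $2$ immediately produces the estimate
\[ M^3 \leq \tfrac{3}{2}\,\|g'\|_{L^\infty}\,\|g\|_{L^2}^2, \]
which is the desired form of \eqref{sharpg} up to the numerical constant. As an equivalent geometric route I would instead use the Lipschitz bound $g(y) \geq M - L|y-x_0|$ with $L = \|g'\|_{L^\infty}$ on the interval $[x_0 - M/L,\,x_0 + M/L]$ where the right-hand side is nonnegative, and lower bound $\|g\|_{L^2}^2$ by the explicit $L^2$-norm of the tent profile to obtain $\|g\|_{L^2}^2 \geq 2M^3/(3L)$, arriving at the same inequality.

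There is essentially no substantive obstacle in either argument; both are one-line consequences of the fundamental theorem of calculus and the compact-support hypothesis. The only delicate point is the numerical constant: the symmetric averaging argument and the tent-function computation both give $\tfrac{3}{2}$, which the tent profile $g(y) = (M - L|y|)_+$ shows to be sharp. I would therefore expect the right constant to be $\tfrac{3}{2}$ rather than $\tfrac{3}{4}$; however, because this proposition is invoked in the proof of Lemma \ref{lem:fss} only to ensure that the resulting function $\cF_\gamma$ is monotone and tends to infinity, any absolute constant is sufficient, and a factor-of-two discrepancy would simply be absorbed into the definition of $\cF_\gamma$ without affecting the singularity formation conclusion.
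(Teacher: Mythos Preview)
Your approach is exactly the paper's: write $g^3(\xi)=\int_{-\infty}^{\xi}3g^2g'\,d\eta$, pull out $\|g'\|_{L^\infty}$, and use the half-line with the smaller $L^2$ mass. You are also right about the constant. The paper asserts
\[
\min\big\{\|g\|_{L^2(-\infty,\xi)},\,\|g\|_{L^2(\xi,\infty)}\big\}\le \tfrac12\,\|g\|_{L^2(\mR)},
\]
which is false (take the two half-line norms equal); the correct elementary inequality is on the \emph{squares}, $\min\{a^2,b^2\}\le\tfrac12(a^2+b^2)$, and plugging that in gives the constant $\tfrac32$ rather than $\tfrac34$. Your tent profile $g(y)=(M-L|y|)_+$ shows $\tfrac32$ is sharp, so the proposition as stated is off by a factor of two. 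As you correctly observe, only the existence of some absolute constant matters in Lemma~\ref{lem:fss}; the discrepancy would simply rescale the definition of $\cF_\gamma$ in \eqref{def:cF} without touching the singularity-formation conclusions.
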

\begin{pf}
This is a special case of the Gagliardo-Nirenberg interpolation inequality, but we prove it for completeness.

For any $\xi\in \mathbb{R}$, one has
\[
\min  \{\|g\|_{L^2(-\infty,\xi)}, \|g\|_{L^2(\xi,\infty)} \}\leq \frac{1}{2}\|g\|_{L^2(\mathbb{R})}\,.
\]
Without loss of generality, one assumes $\|g\|_{L^2(-\infty,\xi)}=\min  \{\|g\|_{L^2(-\infty,\xi)}, \|g\|_{L^2(\xi,\infty)}\}$.
Hence,
\be\label{f:a:b}\big|g^3(\xi)\big|=\Big|\int_{-\infty}^{\xi}\frac{d }{d\xi} g^3(\xi)\,d\xi\Big|\le3\,\|g\|_{L^2((-\infty,\xi) )}^2\, \| g' \|_{L^\infty((-\infty,\xi))}\leq \frac{3}{4}\,\|g\|_{L^2(\mathbb{R} )}^2\, \| g' \|_{L^\infty(\mathbb{R})}.\ee
Thus we have the desired inequality \eqref{sharpg}.
\end{pf}

\bigskip

{\bf Acknowledgement.}  Cheng would like to thank the Institute of Natural Sciences, Shanghai Jiao Tong University, for its warm invitation and kind hospitality of several visits to the institute, during which this study was carried out. Cheng would also like to thank Dr. Shengqi Yu for useful discussions. The research of Qu is supported in part by NSFC grant 11501121, Yang Fan Foundation of Shanghai on Science and Technology (no. 15YF1401100), Shanghai Key Laboratory for Contemporary Applied Mathematics at Fudan University and a startup grant from Fudan University.
The research of Xie was supported in part by  NSFC grants 11422105, and 11511140276, Shanghai Chenguang program, and the Program for
Professor of Special Appointment (Eastern Scholar) at Shanghai
Institutions of Higher Learning.

{\clrr }

\end{document}